\theoremstyle{definition}
\newtheorem{theorem}{Theorem}[section]
\newtheorem{rem}[theorem]{Remark}
\newtheorem{lemma}[theorem]{Lemma}
\newtheorem{proposition}[theorem]{Proposition}
\theoremstyle{definition}
\newtheorem{definition}[theorem]{Definition}
\newtheorem{notation}[theorem]{Notation}
\newtheorem{example}[theorem]{Example}
\newcommand{\Dgm}{\mathrm{PD}} 
\newcommand{\birth}{b} 
\newcommand{\death}{d} 
\newcommand{\Mch}{\mathcal{M}} 
\newcommand{\define}[1]{{\bf \boldmath{#1}}}
\newcommand{\R}{\mathbb{R}}
\newcommand{\bS}{\mathbb{S}}
\newcommandx{\pmv}[2][1= X, 2= \mathbf{v}]{\mathcal{#1}(#2)} 
\newcommand{\ch}[1][M]{C(#1)} 
\newcommand{\V}{{\bf v}}
\newcommand{\ip}[2]{\langle #1, #2 \rangle} 
\newcommand{\image}{\mathrm{img}}
\newcommand{\PHT}{\text{PHT}}
\title{Decomposing the Persistent Homology Transform of Star-Shaped Objects}
\author{Shreya Arya\footnote{University of Pennsylvania, Department of Mathematics, Philadelphia, PA, USA.}, Barbara Giunti\footnote{SUNY - University at Albany, Department of Mathematics and Statistics, Albany, NY, USA}, Abigail Hickok\footnote{Yale University, Wu Tsai Institute and Department of Statistics and Data Science, New Haven, CT, USA}, Lida Kanari\footnote{École Polytechnique Fédérale de Lausanne (EPFL), Blue Brain Project, Campus Biotech, 1202 Geneva, Switzerland}, \\Sarah McGuire\footnote{Michigan State University, Department of Computational Mathematics, Science, \& Engineering, East Lansing, MI, USA}, Katharine Turner\footnote{
Australian National University, Mathematical Sciences Institute, Canberra, Australia. \textit{Corresponding author}}}
\date{}
\begin{document}
\maketitle

\begin{abstract}
In this paper, we study the geometric decomposition of the degree-$0$ Persistent Homology Transform (PHT) as viewed as a persistence diagram bundle. 
We focus on star-shaped objects as they can be segmented into smaller, simpler regions known as ``sectors". 
Algebraically, we demonstrate that the degree-$0$ persistence diagram of a star-shaped object in $\R^2$ can be derived from the degree-$0$ persistence diagrams of its sectors. 
Using this, we then establish sufficient conditions for star-shaped objects in $\R^2$ so that they have ``trivial geometric monodromy''. 
Consequently, the PHT of such a shape can be decomposed as a union of curves parameterized by $\bS^1$, where the curves are given by the continuous movement of each point in the persistence diagrams that are parameterized by $\bS^{1}$. 
Finally, we discuss the current challenges of generalizing these results to higher dimensions.
\end{abstract}

\section{Introduction}
Topological data analysis (TDA) ~\cite{Carlsson2009,wasserman} is a discipline studying the increasingly complex datasets that are becoming available in different scientific disciplines. 
Specifically, topological persistence allows the stable representation of data and can be used to separate noise from signal. 

The Persistent Homology Transform (PHT for short), introduced in ~\cite{Turner2014}, is a method from TDA that describes a shape in $\R^d$ by a set of \textit{persistence diagrams}. 
These diagrams are obtained from the sublevel-set filtrations obtained by ``scanning'' the shape in different directions.
It was shown in \cite{curry2022many} and \cite{ghrist2018persistent} that this assignment of shapes to a set of persistence diagrams is injective. 
In other words, the PHT is a statistical tool that sufficiently describes shapes in $\R^d$.

One of the advantages of PHT is that it can be used to define an informative metric on the shape space so that one can identify morphological differences among a collection of shapes. 
The PHT is especially useful for the computational modeling of $2D$ and $3D$ shapes; 
previous applications include the study of primate heel bones~\cite{Turner2014}, shape classification of binary 2D images \cite{Hofer2017shapeclassification}, and prediction of clinical outcomes in patients~\cite{Crawford2016}. 

The PHT of a shape can be viewed as a persistence diagram bundle. Persistence diagram bundles are analogous to fibre bundles but the fibres at each location are replaced with persistence diagrams. 
Given a flat connection on a fibre bundle we have a notion of how the fibres locally connect to each other. 
For any loop in the base space we can study the corresponding action on the fibre above its starting location as we use the connection to lift the loop from the base space into the fibres.  
Monodromy is the name for the action. 
This monodromy is trivial if we always return to the same element. 
Equivalently, the monodromy is trivial when there exists a set of global sections which form a basis when restricted to any single fibre. 
Persistence diagram bundles are not fibre bundles but do have similarities. 
Continuity of the persistence diagrams implies continuity of the paths of the points within the persistence diagrams. 
This is only well-defined as stated if we have multiplicity at most one for any off-diagonal points. 
We call these objects \textit{simple persistence diagrams}. 
For the purposes of this paper, we will restrict to persistence diagram bundles which only contain simple persistence diagrams. 
We can then adapt the notion of trivial monodromy to persistence diagram bundles.

The objective of this paper is to study the monodromy of the degree-$0$ PHT of a shape. 
In particular, we restrict our attention to planar star-shaped objects (see \cref{sec:star_shaped_decomposition}). 
We first prove (\Cref{thm:sector_decomposition}) that the PHT of a planar star-shaped object $M$ can be decomposed into the direct sum of the PHTs of each ``sector'' (see definition in~\cref{sec:star_shaped_decomposition}) of $M$. 
The benefit of our decomposition is that the PHT of a single sector is much simpler than the PHT of the entire shape (see \Cref{lem:finite_support_Mi}, for example). 
Using the decomposition from \Cref{thm:sector_decomposition}, we then prove (\Cref{thm:no_monodromy}) that the PHT of a planar star-shaped polygon in general position has trivial monodromy. 
This result holds more generally for star-shaped planar polygons whose persistence diagrams have no points of multiplicity greater than $1$ (see \Cref{prop:no_monodromy_suff_conditions}). 
This is particularly useful in applications: inputs are often encoded into meshes to be processed by a machine; in other words, they are polygons. 
Moreover, the different components of each input are often star-shaped. 
To the best of our knowledge, our paper is the first to discover a relationship between the geometry of the PHT of $M$ and the geometry of the shape $M$ itself.
 
Star-shaped objects are a large class of shapes that appear often in real-world data. 
For example, plant morphology data---specifically, leaf shapes---can often be represented as star-shaped objects. 
In \cite{Chitwood2017leaves}, the authors presented a dataset of approximately $3,300$ leaf scans, representing $40$ different Maracuy\'a (also commonly referred to as Passiflora) species.
Most of the leaf samples can be represented as star-shaped objects, with the exception of only a couple of species (e.g., samples from the \textit{P. morifolia} and \textit{P. cincinnata} species). 
\cref{fig:leaf_examples} shows sample representations of different species from this dataset, which satisfy the star-shaped conditions that we consider for the remainder of this paper. 
Our polygonal assumption is also reasonable for real-life objects as they can be approximated by polygons or polyhedra of increasingly many edges/faces so that they are arbitrarily close to the true shape. 

\begin{figure}[h!]
    \centering
    \includegraphics[width = .8\textwidth]{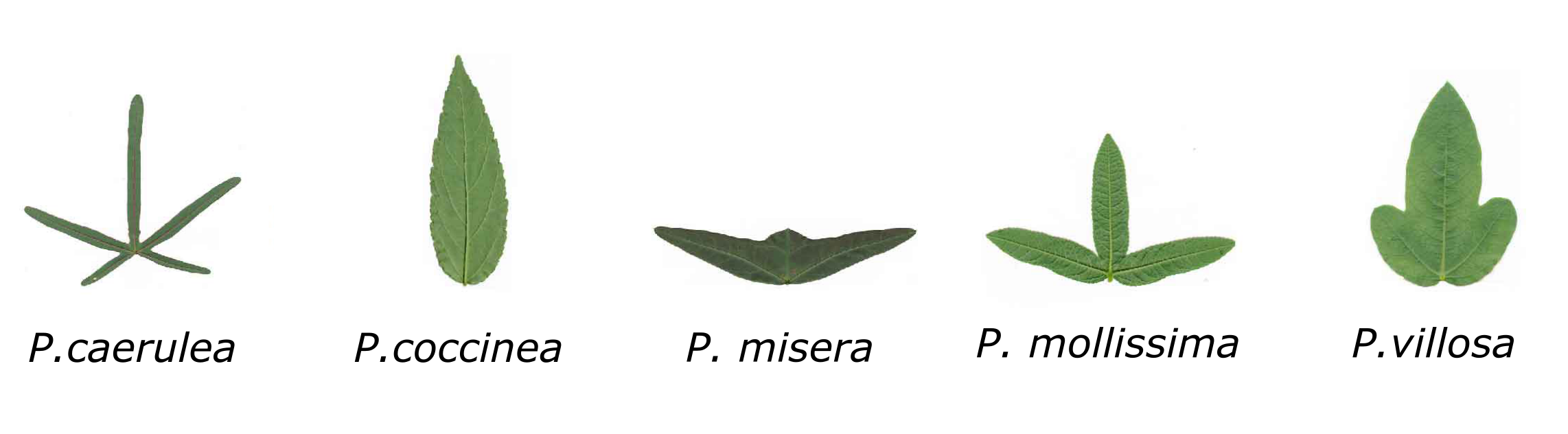}
    \caption{Sample leaf scans from the Maracuy\'a leaf shape dataset of five different species, each satisfying the conditions of a star-shaped object \cite{Chitwood2017leaves}.}
    \label{fig:leaf_examples}
\end{figure}

\section{Background and Preliminaries}

Homology is an algebraic way to describe topological 
features \cite{hatcher}. 
Although homology groups can be defined and computed in all positive integer degrees, this paper will focus only on degree-$0$ homology. 
As such, we will restrict the background definitions to degree zero.

\begin{definition}
  Let $M \subset \R^d$. 
  The \define{degree-$0$ homology of $M$} over a field $\mathbb{F}$, denoted by $H_0(M, \mathbb{F})$, can be defined as the vector space over $\mathbb{F}$ of formal linear combinations of connected components of $M$. 
  The elements of $H_0(M, \mathbb{F})$ are degree-$0$ homology classes.
\end{definition}

For the remainder of the paper, we fix a field $\mathbb{F}$ and omit it from our notation. We let $[x]_M$ denote the connected component that contains $x$.
\medskip

Reduced homology is a modification of ordinary homology theory that was designed so that the reduced homology groups of a point are all zero. 

\begin{definition}
  Let $M \subset \R^d$.
  The \define{degree-$0$ reduced homology of $M$}, written as $\widetilde{H}_0(M)$, can be defined as the vector space of formal linear combinations of connected components whose coefficients sum to $0$.    
\end{definition}
\noindent That is, elements of $\widetilde{H}_0(M)$ are of the form $\sum_i \lambda_i [x_i]_M$ with $\sum_i \lambda_i=0$. For all non-empty $M$, the reduced homology satisfies 
\begin{equation*}
    \dim(\widetilde{H}_0(M)) = \dim(H_0(M)) - 1\,.
\end{equation*}

For the statements of the main results, the reader needs only to be familiar with degree-$0$ homology, but in some proofs, we will use the degree-$1$ homology of $M$, denoted $H_1(M)$. 
For interested readers unfamiliar with $H_1$, we recommend \cite{hatcher}. 
For the purposes of this paper, the only fact we need is that $H_1(M)$ is zero if $M$ is contractible (i.e. if $M$ can be continuously deformed into a point).

An increasing family of subsets $\{M_t\}_{t\in \R}$ (i.e., a collection of subsets of $\R^d$ such that $M_s\subset M_t$ for all $s\leq t$) is called a \define{filtration}. 
Given a filtration, we can study how the homology evolves as $t$ increases. For each $M_s \subset M_t$, the inclusion map $M_s \xhookrightarrow{} M_t$ induces a linear map between the homology groups  
\begin{align*}
    \iota_s^t\colon H_0(M_s)&\to H_0(M_t)\\
    \sum_i \lambda_i[x_i]_{M_s} &\mapsto \sum_i \lambda_i[x_i]_{M_t} \, .
\end{align*}
This is well-defined because if $[x]_{M_s} = [y]_{M_s}$, then $[x]_{M_t} = [y]_{M_t}$. 
This same construction works for reduced homology groups as well because the sum of the coefficients remains zero.

\begin{definition}
    A \define{persistence module} is a pair $(\{V_t\}_{t \in \R}, \{\phi_{st}\}_{s \leq t})$, where $\{V_t\}$ is a set of vector spaces over a field $\mathbb{F}$ and $\{\phi_{st}\}$ is a set of linear maps $\phi_{st} \colon V_s \to V_t$ such that $\phi_{st} \circ \phi_{rs} = \phi_{rt}$ for all $r \leq s \leq t\in\R$ and $\phi_{tt}$ is the identity over $M_t$.
\end{definition}

\noindent In other words, a persistence module is a functor $X \colon \R \to \mathbf{Vect}$, where $\mathbf{Vect}$ is the category of vector spaces over a field $\mathbb{F}$. 
Typically, persistence modules are obtained from the homology of a filtration.

\begin{definition}
Let $\{M_t\}_{t\in \R}$ be a filtration. 
The \define{persistent homology} in degree 0 over a field $\mathbb{F}$ is the persistence module $(\{H_0(M_t)\}_{t \in \R}, \{\iota_s^t\}_{s \leq t})$, where $\iota_s^t \colon H_0(M_s) \to H_0(M_t)$ is the map induced by the inclusion $M_s \hookrightarrow M_t$.
\end{definition}

\begin{definition}(\cite[Chapter VII]{edel_harer2010computational}) Let $\{M_t\}_{t \in \R}$ be a filtration. 
The \define{reduced persistent homology} in degree 0 over a field $\mathbb{F}$ is the persistence module $(\{\widetilde{H}_0(M_t)\}_{t\in\R}, \{\iota_s^t\}_{s \leq t})$, where $\widetilde{H}_0(M_t)$ denotes the reduced homology of $M_t$ and $\iota_s^t \colon \widetilde{H}_0(M_s) \to \widetilde{H}_0(M_t)$ is the map induced by the inclusion $M_s \hookrightarrow M_t$.
\end{definition}

\begin{definition}
Let $X = (\{X_t\}, \{\phi^X_{st}\})$ and $Y = (\{Y_t\}, \{\phi^Y_{st}\})$ be two persistence modules.
An \define{isomorphim of persistence modules} is a set $\{f_t\colon X_t\to Y_t\}_{t\in\R}$ of linear maps such that $f_t$ is an isomorphism of vector spaces for all $t \in \R$ and $f_{t}\circ\phi^{X}_{s t}=\phi^{Y}_{s t}\circ f_s$ for all $s\leq t\in\R$.
If there exists an isomorphism between $X$ and $Y$, they are said to be \define{isomorphic}.
\end{definition}

The building blocks of persistence modules are interval modules.
Recall that an \define{interval} in $\R$ is a non-empty, connected subset of $\R$.
    
\begin{definition}
    Let $A \subset \R$ be an interval. 
    The \define{interval module} with support $A$ (over the field $\mathbb{F}$) is the persistence module $(\{V_t\}, \{\phi_{st}\})$ such that $V_t =\mathbb{F}$ for $t \in A$ and $V_t =0$ for $t\notin A$ and such that the transition maps $\phi_{st}$ are the identity for $s \leq t \in A$. 
    We denote this interval module by $\mathcal{I}_A$.
\end{definition}
    
Our assumptions on the shapes of interest will result in their persistence module(s) $X = (\{V_t\}, \{\phi_{st}\})$ being pointwise finite-dimensional (p.f.d.); i.e., $\dim_{\mathbb{F}}(V_t)<\infty$ for all $t\in\R$. Therefore, we may apply the following theorem.
    
\begin{theorem}(\cite[Theorem 1.1]{Crawley_Boevey_2015})\label{thm:intervaldecomposition}
    A p.f.d. persistence module $X$ admits an \define{interval decomposition}. 
    That is, the module can be decomposed as follows:
    \[
    X \cong \bigoplus\limits_{A \in \mathcal{S}} \mathcal{I}_A \,,
    \]
    where $\mathcal{S}$ is a set of intervals $A$ in $\R$. Moreover, this decomposition is unique up to reorderings of the intervals.
\end{theorem}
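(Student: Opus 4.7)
My plan is to prove existence and uniqueness separately, first by classifying the indecomposable p.f.d.\ persistence modules as interval modules, and then by appealing to a Krull--Schmidt--Azumaya style argument for the global assembly.

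Phase 1 (indecomposables are interval modules). Let $X = (\{V_t\}, \{\phi_{st}\})$ be an indecomposable p.f.d.\ persistence module, and set $A = \{t \in \mathbb{R} : V_t \neq 0\}$. I would first argue that $A$ is an interval: if some $t_0 \notin A$ lies strictly between two points of $A$, then $X$ splits as the direct sum of its restrictions to $(-\infty, t_0)$ and $(t_0, \infty)$, contradicting indecomposability. Next, I would show that every transition map $\phi_{st}$ with $s \leq t$ in $A$ is an isomorphism and that each $V_t$ is one-dimensional. To do this, I would pick a nonzero $v \in V_{t_0}$ for some $t_0 \in A$, push it forward via $\phi_{t_0, t}$ for $t \geq t_0$, pull it backward to compatible preimages $u_s \in V_s$ for $s \leq t_0$ (exploiting the finite dimensionality of each $V_s$ to make coherent choices via a limit argument on kernels/images), and show the resulting one-dimensional sub-persistence-module splits off as a direct summand. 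Indecomposability then forces $X$ to be this interval module.

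Phase 2 (global decomposition and uniqueness). I would build the decomposition of an arbitrary p.f.d.\ module $X$ via transfinite induction: at each stage, if the current complement is nonzero, Phase 1 produces an interval summand to split off; taking colimits at limit ordinals while using the pointwise finite-dimensionality as a compactness substitute eventually exhausts $X$. For uniqueness, I would observe that $\mathrm{End}(\mathcal{I}_A) \cong \mathbb{F}$, which is local, so the Azumaya--Krull--Schmidt theorem implies that the multiset of intervals appearing in any such decomposition is determined up to reordering.

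The main obstacle will be the coherent-preimage construction in Phase 1. Over $\mathbb{R}$-indexed modules, as opposed to finite totally ordered posets or $\mathbb{Z}$-indexed ones, one cannot simply induct on the ambient dimension: choosing preimages at uncountably many indices to the left of $t_0$ and ensuring they glue into a genuine sub-persistence-module requires a transfinite argument in which the p.f.d.\ hypothesis plays the role of compactness, ruling out pathological behaviour in the limit of transition maps. This is precisely the subtlety handled by Crawley-Boevey's original functor-theoretic proof, and any independent approach must confront the same technical core.
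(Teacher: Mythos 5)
This statement is not proved in the paper at all: it is quoted verbatim from Crawley--Boevey, so there is no in-paper argument to compare yours against. Judged on its own terms, your proposal correctly identifies the standard architecture (classify the indecomposables as interval modules, then invoke Azumaya--Krull--Schmidt via $\mathrm{End}(\mathcal{I}_A)\cong\mathbb{F}$ being local for uniqueness), and the uniqueness half is essentially complete once existence is granted. But both halves of the existence argument have genuine gaps beyond the one you flag.

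In Phase 1, the step ``show the resulting one-dimensional sub-persistence-module splits off as a direct summand'' is the entire theorem, not a step within it: you must produce both a submodule and a complementary submodule compatibly across uncountably many indices, and neither the coherent choice of preimages $u_s$ (where, e.g., the decreasing family of subspaces $\bigcap_{s<t_0}\mathrm{im}\,\phi_{s,t_0}$ must be analysed, and a vector may fail to lie in the image of $\phi_{s,t_0}$ for all $s$ simultaneously even though it lies in each image separately before stabilization) nor the construction of the complement is routine. In Phase 2, the transfinite ``split off one summand at a time and take colimits at limit ordinals'' procedure is not well-founded as stated: at a limit ordinal the intersection of the chosen complements need not be a complement of the sum of the summands already split off, and the process need not exhaust $X$. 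The existence of \emph{some} decomposition into indecomposables for p.f.d.\ modules is itself a nontrivial theorem (handled by Crawley--Boevey via functorial filtrations, and later by Botnan--Crawley-Boevey via a Zorn's-lemma argument on families of submodules covered by countable pieces); it cannot be obtained by naive transfinite peeling. Your closing remark that the technical core coincides with Crawley--Boevey's is accurate, but it means the proposal is a roadmap that defers precisely the content of the theorem. One last small point: the intervals must be allowed to form a multiset, since the same interval can occur with multiplicity.
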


The interval decomposition of degree-$0$ persistent homology tells us the filtration values $t$ at which new connected components appear or old connected components merge. 
For example, suppose that $X = (\{H_0(M_t)\}, \{\iota_{st}\})$ is the degree-$0$ persistent homology of a filtration $\{M_t\}$. 
Let $\mathcal{I}_A$ be an interval module in the interval decomposition of $X$, with $b = \inf(A)$ and $d = \sup(A)$. 
This indicates that a new connected component first appears (``is born'') in $M_b$ and then ``dies'' (i.e., merges with another connected component) in $M_d$.


\begin{notation}
    Let $\overline{\R}\coloneqq \R\cup \{+\infty\}$, $\overline{\R}^{2}\coloneqq \overline{\R}\times\overline{\R}$, and $\Delta\coloneqq\{(x,x)\in\overline{\R}^2\}$. 
\end{notation}
    
\begin{definition}
    Let $X$ be a persistence module with interval decomposition $X \cong \bigoplus_{A \in \mathcal{S}} \mathcal{I}_A$. We can associate to each interval module $\mathcal{I}_A$ a point in $\overline{\R}^2$ with the first coordinate $\birth(\mathcal{I}_A)\coloneqq\inf(A)$ and second coordinate $\death(\mathcal{I}_A)\coloneqq\sup(A)$. 
    We refer to $\birth(\mathcal{I}_A)$ as the \define{birth} time and $\death(\mathcal{I}_A)$ as the \define{death} time.
    By taking the union over all the intervals in the interval decomposition, we obtain a multiset of points in $\overline{\R}^{2}$. We then add to this multiset a single copy of $\Delta$ which represents an abstract copy of the diagonal.
    We refer to this final multiset as the \define{persistence diagram}, and we denote it by $\Dgm(X)$ for a persistence module $X$.
\end{definition}

Points of the form $(\birth, \infty)$
in $\Dgm(X)$ for $\birth \in \R$ are called \define{essential classes} and points of the form $(\birth, \death)$ where $\death\neq \infty$ are called \define{non-essential classes}. 
In other words, non-essential classes are those that die within the filtration and essential classes are those that are born within the filtration and never die. 

\begin{rem}
Let $\{M_t\}_{t\in\R}$ be a filtration. Let $X$ and $\widetilde{X}$ be the unreduced and reduced persistent homology of $\{M_t\}$, respectively. 
Then their interval decompositions differ only by one interval module, namely $\mathcal{I}_{[b,\infty)}$, where $b$ is a minimum among the births of the essential classes of $X$. In other words, $X = \widetilde{X} \oplus \mathcal{I}_{[b, \infty)}$.
\end{rem}

\begin{definition} 
Let $\Dgm(X)$ and $\Dgm(Y)$ be persistence diagrams represented by countable multisets in $\overline{\R}^2$. A \define{matching} $\Mch$ between $\Dgm(X)$ and $\Dgm(Y)$ is a subset of $\Dgm(X) \times \Dgm(Y)$ such that every element in $\Dgm(X)\backslash \Delta$ and $\Dgm(Y) \backslash \Delta$ appears in exactly one pair.
\end{definition}
\noindent In the above definition, the abstract diagonal element $\Delta$ may appear in many \medskip pairs.

As the non-diagonal elements in $\Dgm(X)$ and $\Dgm(Y)$ lie in $\overline{\R}^2$, we can use the $l_\infty$ distance in the plane to define a metric on $\overline{\R}^2$. We extend $\ell_{\infty}$ to $\overline{\R}^2\cup \Delta$ by defining:
\begin{align*}
    \| (a,b)-\Delta\|_\infty &=\frac{a+b}{2}\,, \\
    \| \Delta - \Delta \|_\infty &= 0 \,, \\
    \| (a, \infty) - (b, \infty)\|_\infty &= |a - b|\,, \\
    \|(a,\infty)-x\|_\infty &=\infty
\end{align*}
for all $a, b \in \R$ and $x \in \R^2 \cup \Delta$.
\medskip

The bottleneck distance, defined below, is an extended metric on the space $\mathcal{PD}$ of persistence diagrams.

\begin{definition}
Let $X$ and $Y$ be two persistence modules. 
The \define{bottleneck distance} $d_B$ between them is defined by
\begin{equation}
  d_B(\Dgm(X), \Dgm(Y)) \coloneqq \inf_{\Mch} \sup_{(x,y)\in \Mch} ||x-y||_\infty
\end{equation}
  where $\Mch \subset  \Dgm(X)  \times \Dgm(Y)$ is a matching.
\end{definition}

We conclude by recalling a standard stability result in persistence theory:

\begin{theorem}[\cite{OPstability}] 
Let $T$ be a triangulable space and $f,g \colon T \rightarrow \R$. 
Then the persistence diagrams $\Dgm(f)$ and $\Dgm(g)$ for their sublevel set filtrations satisfy
\[
d_B(\Dgm(f),\Dgm(g)) \leq ||f -g||_\infty \, .
\]
\end{theorem}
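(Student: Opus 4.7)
The plan is to convert the geometric bound $\|f-g\|_\infty \le \epsilon$ into an algebraic $\epsilon$-interleaving between the sublevel-set persistence modules of $f$ and $g$, and then invoke the standard isometry theorem which equates interleaving distance with bottleneck distance.

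Set $\epsilon \coloneqq \|f-g\|_\infty$, and write $F_t \coloneqq f^{-1}((-\infty,t])$ and $G_t \coloneqq g^{-1}((-\infty,t])$ for the sublevel sets. First I would observe that the pointwise bound $|f(x)-g(x)|\le\epsilon$ immediately gives the set-theoretic inclusions $F_t \subseteq G_{t+\epsilon}$ and $G_t \subseteq F_{t+\epsilon}$ for every $t \in \R$, because $f(x)\le t$ forces $g(x)\le t+\epsilon$. Applying the functor $H_0$ to these inclusions produces linear maps $\phi_t \colon H_0(F_t) \to H_0(G_{t+\epsilon})$ and $\psi_t \colon H_0(G_t) \to H_0(F_{t+\epsilon})$. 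Since all maps in sight are induced from inclusions of subspaces of $T$, the composites $\psi_{t+\epsilon}\circ\phi_t$ and $\phi_{t+\epsilon}\circ\psi_t$ both coincide with the internal transition maps $\iota_t^{t+2\epsilon}$ of the respective modules, and $\phi,\psi$ commute with the internal transitions $\iota_s^t$. This is exactly an $\epsilon$-interleaving between the two p.f.d.\ modules; the p.f.d.\ property follows because triangulability of $T$ ensures that each sublevel set has finitely generated degree-$0$ homology.

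The second step is to invoke the algebraic isometry theorem: for any two p.f.d.\ persistence modules $X,Y$, the existence of an $\epsilon$-interleaving implies $d_B(\Dgm(X),\Dgm(Y)) \le \epsilon$. Applying this to the interleaving constructed in the previous paragraph and substituting $\epsilon = \|f-g\|_\infty$ gives the stated inequality.

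The main obstacle is the isometry theorem itself, which is not formal from \Cref{thm:intervaldecomposition}. One classical route proves it via the rank invariant: an $\epsilon$-interleaving forces, for all $s\le t$, the inequalities $\operatorname{rk}(\phi^X_{st}) \le \operatorname{rk}(\phi^Y_{s-\epsilon,\,t+\epsilon})$ and the symmetric one. Reading these rank bounds as constraints on how many birth--death intervals of $X$ can fail to be matched into an $\ell_\infty$-box of radius $\epsilon$ around an interval of $Y$ (the ``box lemma''), one then invokes a Hall-type marriage argument to build a global matching $\Mch$ realizing the bottleneck bound. A less combinatorial derivation by induction on the number of interval summands (following Bauer--Lesnick) reaches the same conclusion and is arguably cleaner in the p.f.d.\ setting.
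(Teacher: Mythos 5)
The paper offers no proof of this statement: it is quoted verbatim as a known result and delegated entirely to the citation \cite{OPstability}, so there is no ``paper's route'' to compare against. Your proposal is the standard modern proof of stability --- pass from the pointwise bound $\|f-g\|_\infty\le\epsilon$ to the inclusions $F_t\subseteq G_{t+\epsilon}$ and $G_t\subseteq F_{t+\epsilon}$, observe that the induced maps on homology form an $\epsilon$-interleaving (the required triangles commute because every map is induced by an inclusion of subspaces of $T$), and then invoke algebraic stability. This is a genuinely different argument from the one in \cite{OPstability}, which predates interleavings and instead proves a box lemma via rank inequalities and then interpolates between $f$ and $g$ along the segment $(1-\lambda)f+\lambda g$; the interleaving route is cleaner and generalizes beyond sublevel-set filtrations, while the original route is self-contained at the level of diagrams.

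There is one genuine gap in your write-up: the claim that triangulability of $T$ alone forces the modules to be p.f.d.\ is false. A continuous function on a compact triangulable space can have a sublevel set with infinitely many connected components (a level set can be a Cantor set in $[0,1]$, for instance), so $H_0$ of a sublevel set need not be finite-dimensional. The theorem as quoted in the paper silently omits the tameness hypothesis present in \cite{OPstability} (finitely many homological critical values and finite-dimensional homology of every sublevel set); you need that hypothesis, or alternatively the $q$-tameness framework of Chazal et al., before the algebraic stability theorem you invoke applies. With tameness restored, and granting the isometry theorem (which you correctly flag as the substantial ingredient and only sketch), the argument is complete.
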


\subsection{The Persistent Homology Transform}

\begin{definition}\label{ex: height function}
Let $M\subset \R^d$ and $\V\in\mathbb{S}^{d-1}$. 
The \define{height function} of $M$ with respect to $\V$ is $h_{\V}\colon M\to \R$, where $h_{\V}(x)\coloneqq \ip{\V}{x}$. 
In order words, $h_\V(x)$ is the scalar product in $\R^d$ of $x$ and $\V$.
The \define{sublevelset filtration} of $M$ with respect to $\V$ is the filtration given by $\{h_\V^{-1}((-\infty, t])\}_{t\in\R}$.
For the unreduced and reduced homology, the associated degree-$0$ persistence modules are, respectively, $(\{H_0(h_\V^{-1}(-\infty, t])\}_{t\in\R}, \{\iota_s^t\}_{s \leq t})$ and $(\{\widetilde{H}_0(h_\V^{-1}(-\infty, t])\}_{t\in\R}, \{\tilde{\iota}_s^t\}_{s \leq t})$, where $\iota_s^t$ and $\tilde{\iota}_s^t$ are the maps on unreduced and reduced homology, respectively, that are induced by the inclusion $M_s \xhookrightarrow{} M_t$, where $M_s=\{x\in M \mid \langle x, \V\rangle \le s\}$. 
\end{definition}

The \define{Persistent Homology Transform} (PHT as defined in~\cite{Turner2014}) describes a shape in $\R^d$ with a set of persistence diagrams indexed by directions $\V \in \bS^{d-1}$. 
In more detail,
the PHT of a shape $M\subset \R^d$ is a map that associates each direction $\V$ in the unit sphere with the persistence diagram obtained by the sublevelset filtration (\cref{ex: height function}) in that direction. 
 
The degree-$0$ persistence diagram associated with the height function in direction $\V$ is denoted by $\Dgm_0(M, \V)$. 
It summarises the changes in the topology of $M$ filtered in direction $\V$. 
More precisely, let $h_\V$ be the height function in \cref{ex: height function}, and let $M_{\V, t}$ be the $t$-sublevel set of $h_\V$. In other words,
\begin{equation}\label{eq:sublevel}
    M_{\V, t} \coloneqq \{x \in M \mid \ip{x}{\V} \leq t\}\, .
\end{equation}
For each $\V \in \bS^{d-1}$, the set $\{M_{\V, t}\}_{t \in \R}$ is a filtration of $M$. We denote the degree-$0$ persistent homology of $\{M_{\V, t}\}$ by $\text{PH}_0(M, \V)$. 
The diagram $\Dgm_0(M, \V)$ is the persistence diagram for $\text{PH}_0(M, \V)$. 
Similarly, we denote the reduced persistent homology of $\{M_{\V, t}\}$ by $\widetilde{\text{PH}}_0(M, \V)$, and we denote its persistence diagram by $\widetilde{\Dgm}_0(M, \V)$.

\begin{definition}
    The degree-$0$ \define{persistent homology transform} of $M$, denoted by $\PHT_0(M)$, is defined as the continuous map from the $(d-1)$-sphere to the space of persistence diagrams:
    \begin{align*}
        \PHT_0(M)\colon \bS^{d-1}&\to \mathcal{PD} \\
        \V &\mapsto \Dgm_0(M, \V)  \, .
    \end{align*}
\end{definition}

\begin{rem}\label{rem: LipschitzPHT}
    It was shown in \cite{Turner2014} that the map $\PHT_0(M)$ is Lipschitz with respect to the Euclidean metric on $\bS^{d-1}$ and the bottleneck distance on the space of persistence diagrams, with Lipschitz constant equal to $K\coloneqq\sup_{x\in M}\|x\|$. 
\end{rem}

\subsection{Geometric monodromy of persistence diagram bundles}\label{sec:monodromy_def}

\begin{definition}\label{def:pdb}
A \define{persistence diagram bundle} consists of a topological space $B$ (the \define{base space}) and a function $f\colon B\to \mathcal{PD}$ which is continuous with respect to the bottleneck distance. 
The \define{total space} of the persistence diagram bundle is $E\coloneqq \{(p, z) \mid p \in B, z \in f(p)\}$. 
It has the subspace topology induced by $E \subset B \times \mathcal{PD}$. 
\end{definition}

\begin{example}
    Given a shape $M \subset \R^d$, the pair $(\bS^{d-1}, \PHT_0(M))$ is a persistence diagram bundle with base space $\bS^{d-1}$.
\end{example}

Paths in $B$ lift to paths in the total space $E$ if the persistence diagram bundle is ``simple,'' as defined below. 
\begin{definition}\label{def:simple_pdb}
    We say that the persistence diagram bundle $(B, f)$ is \define{simple} if, for all $p \in B$, the persistence diagram $f(p)$ has no points with multiplicity greater than or equal to $2$.
\end{definition}


\noindent Let $(B, f)$ be a simple persistence diagram bundle. 
Suppose we fix some $p\in B$ and a continuous path $\gamma\colon[0,1]\to B$ that starts at $p$. 
Then, for every off-diagonal point $q$ in  $f(p)$, we can use continuity to lift the path $\gamma$ to a path $\widetilde{\gamma} \colon [0, 1] \to E$ such that $\widetilde{\gamma}(0) = (q, p)$ and $\pi \circ \widetilde{\gamma} = \gamma$, where $\pi$ is the projection $(\overline{\R}^2 \cup \Delta) \times B \to B$.
If we assume that the diagonal is absorbing (i.e., there is an interval $[t_0, T] \subset [0, 1]$ such that $\widetilde{\gamma}(t) \neq \Delta$ if and only if $t \in [t_0, T]$), then the path $\widetilde{\gamma}$ is unique.
\medskip

We wish to adapt the notion of monodromy within fibre bundles to persistence diagram bundles. 
In this paper, we only consider the question of whether or not the monodromy is trivial. 
Therefore, we choose to avoid giving a technical definition of the monodromy action and instead just provide a definition for trivial monodromy. 
In this, we follow the approach of studying monodromy in the context of TDA as described in ~\cite{Cerri2013, Hickok2022}. 

Note that since we are only using the continuity of the persistence diagrams, our notion of lift is purely geometric, and as such we will include the adjective ``geometric'' wherever appropriate. 
If we were using persistence modules and interleaving maps then we would instead have algebraic relationships and the monodromy would then be of an algebraic flavour. 
This is beyond the scope of this paper. 

Before defining ``trivial geometric monodromy,'' we must first define sections.


\begin{definition}
    A \define{section} within a persistence diagram bundle $(B,f)$ is a continuous function $\gamma \colon B \to \overline{\R}^2\cup \Delta$ such that $\gamma(p) \in f(p)$ for all $p\in B$ and such that $\{p \in B \mid \gamma(p) \neq \Delta\}$ is connected.
\end{definition}

\begin{definition}\label{def:no_geometric_monodromy}
A persistence diagram bundle $(B, f)$ has \define{trivial geometric monodromy} if there exists sections $\{\gamma_i \colon B  \to \overline{\R}^2 \cup \Delta\}$ such that $f(p) = \bigcup_{i \mid \gamma_i(p) \neq \Delta} \gamma_i(p)$ for all $p \in B$.
\end{definition}

\begin{example}
The persistent homology transform of any convex set has trivial geometric monodromy. 
This is because there is only one section, corresponding to the essential class. 
\end{example}

\begin{example}\label{ex_monodromy}
\cref{fig:spiral} shows an example of a shape that has non-trivial geometric monodromy. 
There is one section encoding the essential connected component of $M$. 
However, there cannot be a single section encoding the other connected component(s) of $M$ over the whole circle because it would not satisfy the conditions to be a function. 
Furthermore, a union of sections cannot encode the other connected component(s) because it would violate continuity.
\end{example}

\begin{figure}[h]
\centering
\begin{subfigure}[t]{0.58\textwidth}
    \centering
    \includegraphics[width = \textwidth]{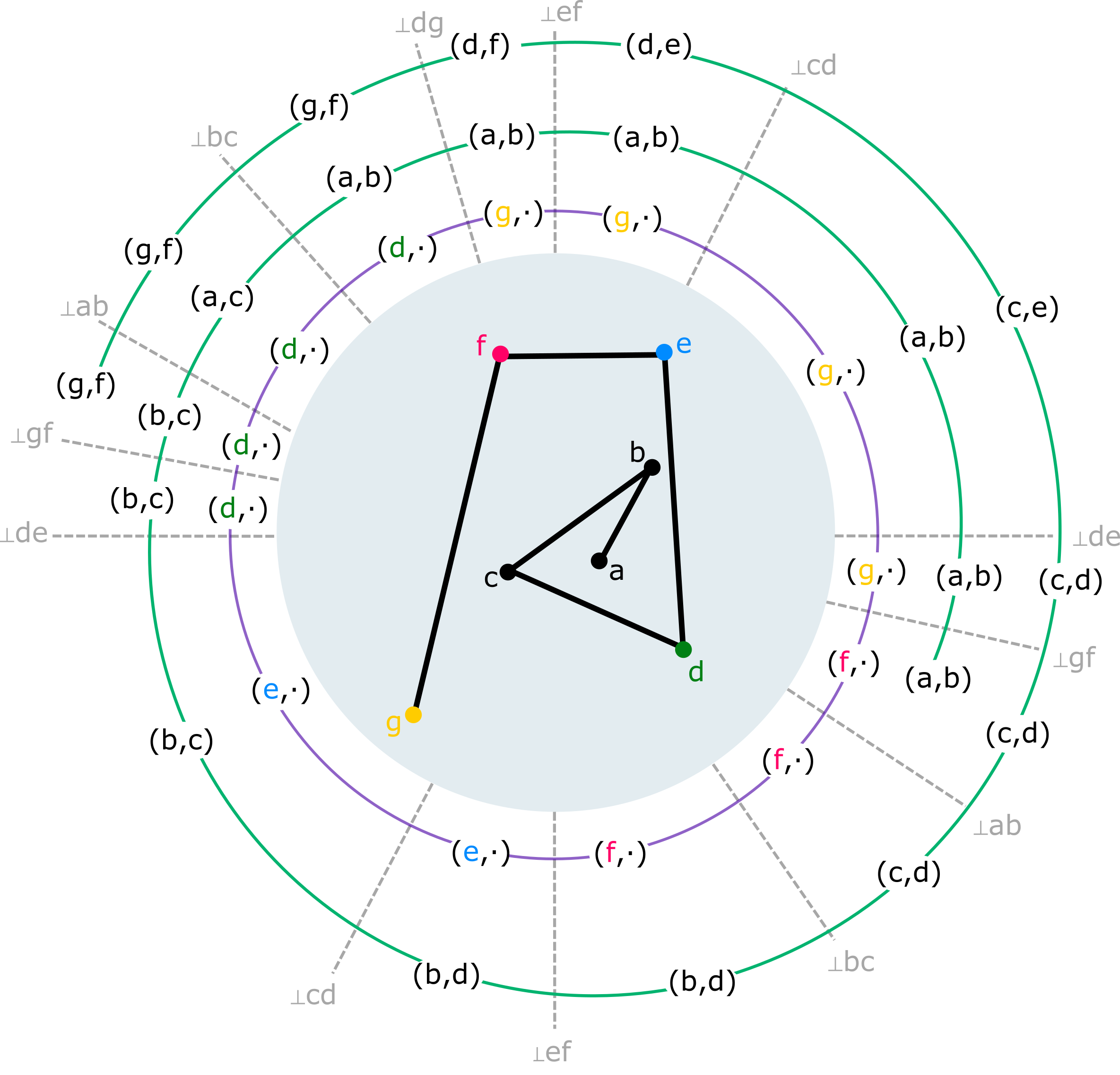}
    \caption{PHT for a spiral.}
    \label{fig:spiral}
\end{subfigure}%
\begin{subfigure}[t]{0.41\textwidth}
    \centering
    \includegraphics[width = \textwidth]{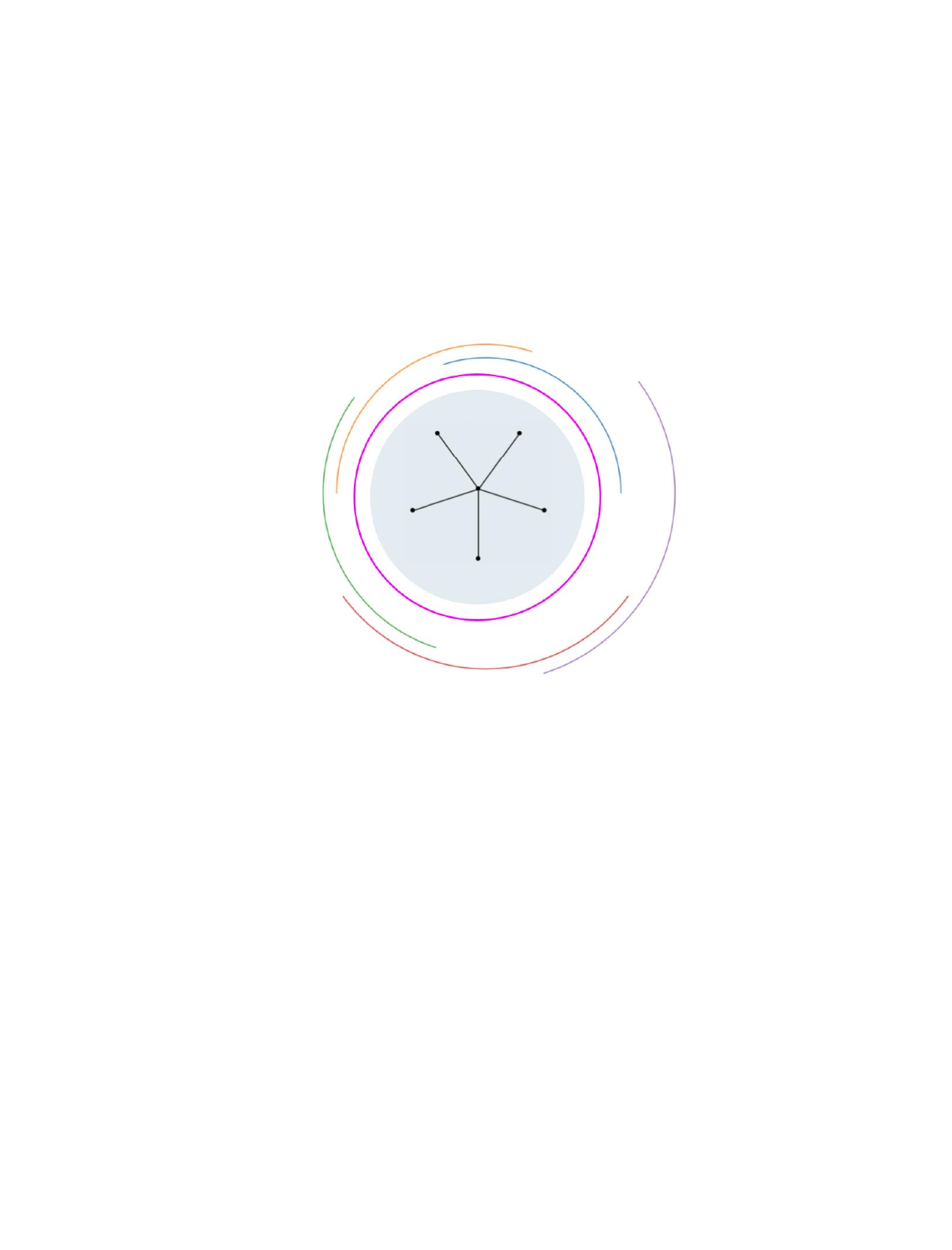}
    \caption{PHT for a star-shaped object.}
    \label{fig:5armstar}
\end{subfigure} 
\caption{Two objects and their associated PHTs, with arcs of $\bS^1$ representing persistent homology classes for directions $\V\in\bS^{1}$. In the case of the spiral (left), the connected components are labeled according to birth-death pairs. The spiral exhibits non-trivial geometric monodromy in its PHT, while the star (right) shows trivial geometric monodromy in its PHT.}
\label{fig:spiral_and_5armstar}
\end{figure}

\subsection{Star-shaped objects and sectors}\label{sec:star_shaped_decomposition}

In this section, we formally define star-shaped objects (\cref{fig:sector_notation,fig:holed_starshaped}), which are the focus of this paper.
We use the convex hull to define the ``sectors'' of a star-shaped object (\cref{fig:sector_notation}). 

\begin{definition}
    A set $M\subset \R^d$ is \define{star-shaped} if there is a point $c \in M$ such that for every $x \in M$, the line segment between $c$ and $x$ is contained in $M$. 
    We call $c$ a \define{center} of $M$. 
\end{definition}

\begin{example}\label{ex_convex}
    Convex sets in $\R^d$ are star-shaped. 
    Moreover, every point in a convex set is a center.
\end{example}

Let $M\subset\R^d$ be star-shaped.
We denote by $\ch$ its convex hull. 
\medskip

\noindent\fbox{%
\parbox{\textwidth}{%
    Throughout the paper, if $M\subset \R^d$ is a star-shaped object, we assume that $\ch$ is a $d$-dimensional convex polytope -- i.e., $\ch$ is the convex hull of a finite set of points in $\R^d$ \cite{Grünbaum2003}. 
}%
}
\medskip

We denote by $\{x_j\}_{j=0}^{m}$ the vertices of $\ch$. 
Fix a center $c$ of $M$. 
Let $\{\sigma_i\}_{i=0}^{n}$ be the collections of convex polytopes on the boundary of $\ch$. 
For each polytope $\sigma_i$, define $S_i$ to be the convex hull of $\sigma_i\cup \{c\}$. 
The \define{sector} $M^i$ in $M$ is the intersection $S_i\cap M$.  
With a little abuse of notation, we use the word sector to refer both to $S_i$ and $M^i$.
See \Cref{fig:sector_notation} for an illustration of our notation. 
Note that each $x_j$ as well as all the points on the segment $x_jc$ belong to multiple sectors. 

In $\R^2$, the number of vertices and sectors of $\ch$ coincide. 
Thus, we can enumerate them so that the $i$-th sector has vertices $x_i$, $x_{(i+1) \mod (m+1)}$, and $c$. 
We will continue to use this notation in the next section.
\medskip

\begin{figure}[h]
\centering
\begin{subfigure}[t]{0.5\textwidth}
    \centering
    \includegraphics[width = .8\textwidth]{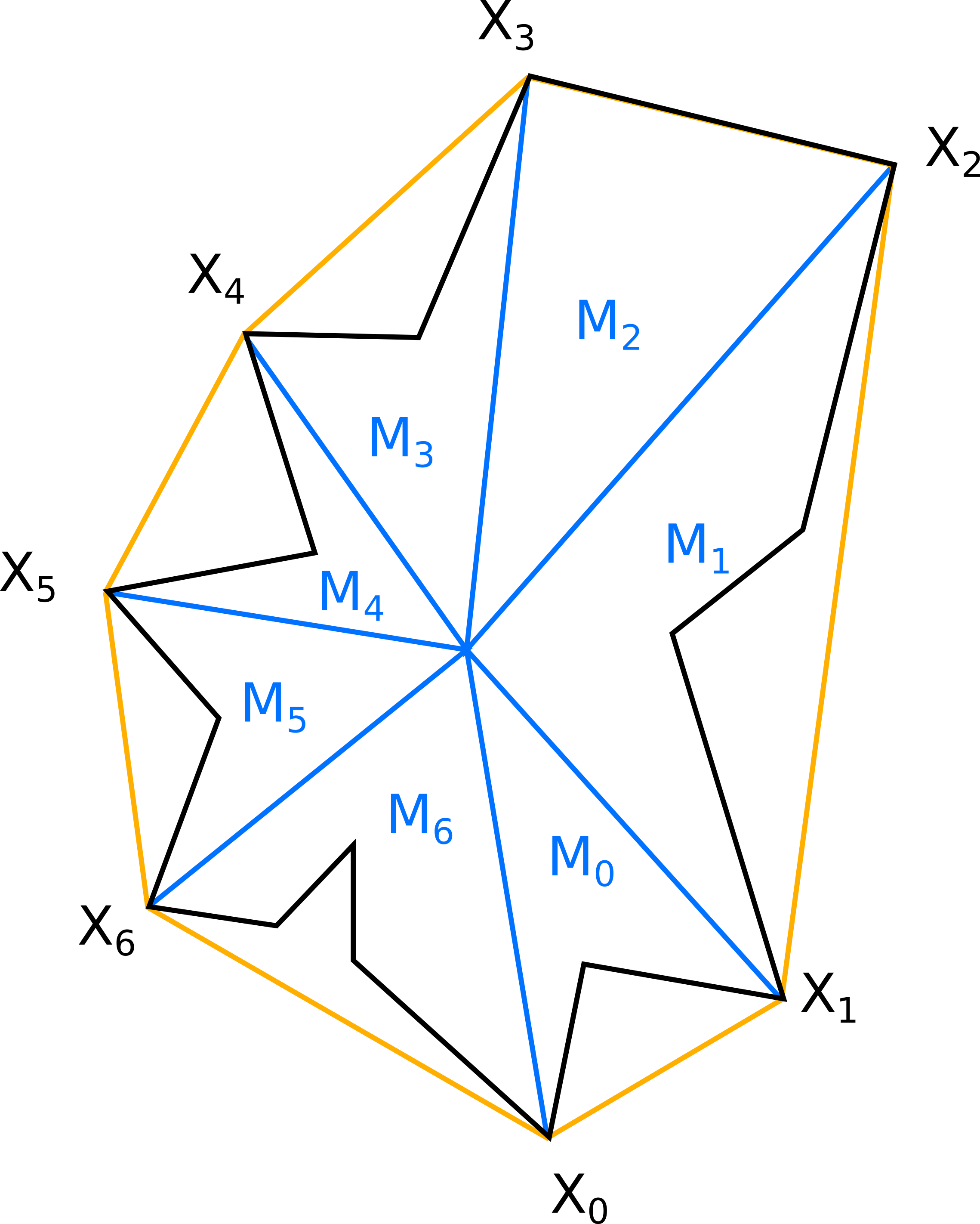}
    \caption{A star-shaped object $M$.}
    \label{fig:sector_notation}
\end{subfigure}%
\begin{subfigure}[t]{0.5\textwidth}
    \centering
    \includegraphics[width = .9\textwidth]{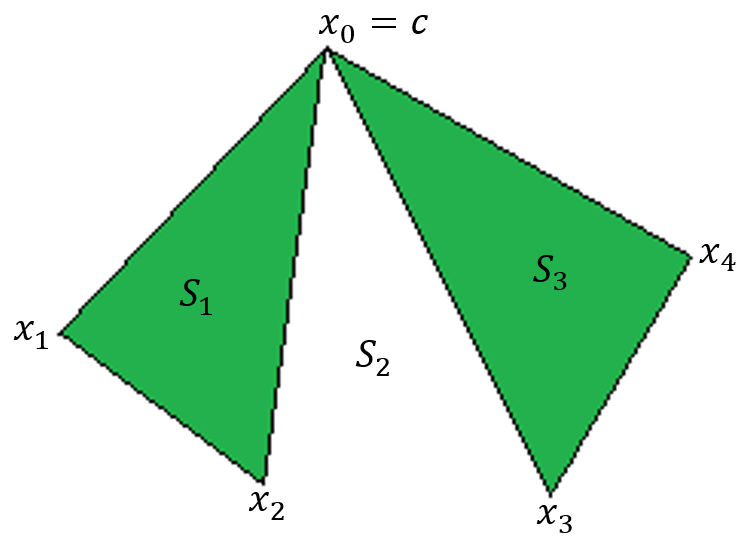}
    \caption{A star-shaped object $N$.}
    \label{fig:holed_starshaped}
\end{subfigure} 
\caption{Two star-shaped objects, $M$ (left) and $N$ (right), divided into sectors. (a) We show the boundary of $M$ in black and the boundary of its convex hull $\ch$ in orange. Each sector $M^i$ is the intersection of $M$ with $S_i$, which is the triangle with vertices $\{x_i, x_{(i+1) \mod (m+1)}, c\}$, where $m +1$ is the number of vertices on the convex hull and $c$ is a center of $M$. (b) The shape $N$ is the union of the two green triangles. Its convex hull is the pentagon with vertices $x_0 = c$, $x_1, \ldots, x_4$. We highlight the fact that there are five sectors, even though $N$ consists of only two triangles. Sectors $M^0$ and $M^4$ consist of the line segments $x_1 c$ and $x_4c$, respectively. Sector $M^2$ consists of the two line segments $x_2c$ and $x_3c$.}
\end{figure}

In analogy with \Cref{eq:sublevel}, we define $M^i_{\V, t}$ to be the $t$-sublevel set of $h_{\V}\vert_{M^i}$, where $h_\V$ is the height function defined in \Cref{ex: height function}. 
In other words,
\begin{equation*}
    M^i_{\V,t} \coloneqq \{x \in M^i \mid h_\V(x) \leq t\} = M^i \cap M_{\V,t}\,.
\end{equation*}
When $\V$ is clear from context, we omit it from our notation and write $M_t$ and $M^i_t$ instead of $M_{\V, t}$ and $M^i_{\V, t}$, respectively.

\section{Decomposition of the persistent homology}\label{sec:alg_dec}

The goal of this section is to prove that, for planar star-shaped objects, the non-essential classes correspond to ``local'' features. 
More precisely, we show that the reduced PHT of $M\subset \R^2$, star-shaped, is entirely determined by the reduced PHT of each of its sectors. 
For a fixed $\V \in \bS^1$, each non-essential persistent homology class of $\Dgm_0(M, \V)$ corresponds to a non-essential persistent homology class of $\Dgm_0(M^i, \V)$ for some sector $M^i$.
\medskip

Throughout this section, fix $\V \in \bS^1$. 
Recall that $c$, $x_i$, and $x_{(i+1) \mod (m+1)}$ are the vertices of the $i$-th sector $S_i$, where $x_0, \ldots, x_m$ are the vertices of the convex hull of $M$.
Without loss of generality, we may enumerate the vertices such that $\{M^i\}_{i=0}^{\ell}$ is the set of sectors  $M^i$ such that either $h_\V(x_i) < h_\V(c)$ or $h_\V(x_{(i+1) \mod (m+1)}) < h_\V(c)$. In other words, the set $\{M^i\}_{i=0}^\ell$ consists of the sectors such that $M^i_t \neq \emptyset$ for some $t < h_\V(c)$. In particular, for sectors $M^i$ with $i > \ell$, we have $M^i_t = \emptyset$ for all $t < h_\V(c)$.

Define the inclusions
\begin{align*}
    \iota^{i, t} &\colon M^i_t \xhookrightarrow{} \bigcup_{j = i}^\ell M^j_t \\
    \eta^{i, t} &\colon \bigcup_{j = i+1}^\ell M^j_t \xhookrightarrow{} \bigcup_{j = i}^\ell M^j_t \, .
\end{align*}
Let $\iota^{i, t}_*\colon \widetilde{H_0}(M^i_t) \to \widetilde{H_0}\Big(\bigcup_{j = i}^\ell M^j_t\Big)$ and $\eta^{i, t}_*\colon\widetilde{H_0}\Big(\bigcup_{j = i+1}^\ell M^j_t\Big) \to \widetilde{H_0}\Big(\bigcup_{j = i}^\ell M^j_t\Big)$ be the maps on reduced homology that are induced by inclusion. 
For each $i$, we may define
\begin{equation}\label{eq:MV_isos}
    \widetilde{H_0}(M^i_t) \oplus \widetilde{H_0}\Big(\cup_{j=i+1}^\ell M_t^j\Big)\xrightarrow[]{\iota^{i, t}_* - \eta^{i, t}_*} \widetilde{H_0}\Big(\cup_{j=i}^\ell M_t^j\Big)\,.
\end{equation}
Note that $\iota_*^{\ell - 1, t} - \eta_*^{\ell - 1, t} : \widetilde{H_0}(M_t^{\ell - 1}) \oplus \widetilde{H_0}(M_t^\ell) \to \widetilde{H_0}(M_t^{\ell - 1} \cup M_t^\ell)$.

We use the maps in \Cref{eq:MV_isos} to define a map
\begin{equation*}
    \alpha_t : \bigoplus_{i=0}^m \widetilde{H_0}(M^i_t) \to \widetilde{H_0}(M_t)\,.
\end{equation*}
For $t < h_\V(c)$, let $\alpha_t$ be the composition of maps
\begin{align}
    \bigoplus_{i=0}^m \widetilde{H_0}(M^i_t) &\xrightarrow[]{\iota^{\ell-1, t}_* - \eta^{\ell-1, t}_*} \bigoplus_{i=0}^{\ell-2} \widetilde{H_0}(M^i_t) \oplus \widetilde{H_0}(M^{\ell-1}_t \cup M^\ell_t) \oplus \bigoplus_{i=\ell +1}^m \widetilde{H_0}(M^i_t) \notag \\
    &\xrightarrow[]{\iota^{\ell-2, t}_* - \eta^{\ell-2, t}_*} \bigoplus_{i=0}^{\ell-3} \widetilde{H_0}(M^i_t) \oplus \widetilde{H_0}\Big( M^{\ell-2}_t \cup M^{\ell-1}_t \cup M^\ell_t\Big) \oplus \bigoplus_{i=\ell +1}^m \widetilde{H_0}(M^i_t) \notag \\
    \cdots &\xrightarrow[]{\iota^{0, t}_* - \eta_*^{0, t}} \widetilde{H_0}\Big(\bigcup_{i=0}^\ell M_t^i\Big) \oplus \bigoplus_{i=\ell +1}^m \widetilde{H_0}(M^i_t) \cong \widetilde{H_0}(M_t)\,. \label{eq:alpha}
\end{align}
The last isomorphism follows from the fact that if $t < h_\V(c)$, then $M^i_t = \emptyset$ for $i > \ell$, so $M_t = \bigcup_{i=0}^\ell M^i_t$. For $t \geq h_\V(c)$, define $\alpha_t \equiv 0$.

\begin{lemma}\label{lem:H1_zero}
    Let $M\subset \R^2$ be a star-shaped object, $\V\in\bS^1$, and $c$ a center of $M$.
    Then $H_1(\bigcup_{j=i}^\ell M^j_t) = 0$ for each $i$ and $t < h_\V(c)$.
\end{lemma}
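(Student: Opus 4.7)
The plan is to show that $U \coloneqq \bigcup_{j=i}^{\ell} M^j_t$ strongly deformation retracts onto its intersection $R \coloneqq U \cap \{h_\V = t\}$ with the height-$t$ level line, and then to observe that, being a subset of a line, $R$ is a disjoint union of intervals and therefore has trivial $H_1$. The retraction is powered by the fact that each sector is itself star-shaped with respect to $c$.

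The first step is the observation that each $M^j = M \cap S_j$ is star-shaped with center $c$: for $x \in M^j$, the segment $cx$ lies in $M$ (by star-shapedness of $M$) and in $S_j$ (by convexity of $S_j$), hence in $M^j$. The second step is to note that along such a segment, $h_\V$ is affine and goes from $h_\V(x) \leq t$ at $x$ to $h_\V(c) > t$ at $c$, so the function
\[
\tau(x) \coloneqq \frac{t - h_\V(x)}{h_\V(c) - h_\V(x)} \in [0, 1)
\]
is continuous on $U$ (the denominator is bounded below by $h_\V(c) - t > 0$), and $r(x) \coloneqq (1 - \tau(x))x + \tau(x) c$ is the unique point on $cx$ at height exactly $t$.

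The third step is to define $H \colon U \times [0, 1] \to U$ by $H(x, s) \coloneqq (1 - s\tau(x))x + s\tau(x)c$ and verify that $H$ is a strong deformation retraction of $U$ onto $R$. Continuity follows from continuity of $\tau$; the identities $H(x, 0) = x$, $H(x, 1) = r(x) \in R$, and $H(x, s) = x$ for $x \in R$ (where $\tau(x) = 0$) are immediate. To see that $H$ lands in $U$, pick any $j$ with $x \in M^j$: the trajectory $H(x, \cdot)$ lies on $cx \subset M^j$ and has height in the interval $[h_\V(x), t]$, so $H(x, s) \in M^j_t \subset U$. Consequently $H_1(U) \cong H_1(R)$, and since every path component of $R \subset \{h_\V = t\}$ is an interval in a line, $H_1(R) = 0$.

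The one subtlety to watch is the well-definedness of $H$ as a map into $U$: a point $x \in U$ may lie in several sectors simultaneously (for example on a shared edge $cx_k$), so we need the trajectory to stay in $U$ regardless of which sector witnesses membership. The resolution is that the formula for $H(x, s)$ depends only on $x$, and the argument above shows that it lies in \emph{any} $M^j_t$ containing $x$, hence in the union $U$. Once this is handled, the rest of the verification is routine.
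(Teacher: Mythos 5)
Your proof is correct and takes essentially the same route as the paper's: both deformation retract $\bigcup_{j=i}^{\ell} M^j_t$ along the segments toward the center $c$ onto the level set $h_\V^{-1}(t)$, and then observe that this level set is a disjoint union of intervals on a line, hence has trivial $H_1$. Your version merely makes the homotopy explicit via the parameter $\tau$ and checks the sector-ambiguity point more carefully than the paper does.
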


\begin{proof}
Suppose that $x \in M^j_t$, and let $L_x$ be the line segment from $x$ to $c$, which is contained in $M^j$ by definition of $c$ and $M^j$. 
Let $L_{xt} \coloneqq L_x \cap M^j_t$. 
The other endpoint of $L_{xt}$ (besides $x$ itself) must be in the $t$-level set $h_\V^{-1}(t)$ because $t < h_\V(c)$, so $c \not\in L_{xt}$. 
We deformation retract each $x \in \bigcup_{j=1}^\ell M^j_t$ along the line $L_{xt}$ to obtain a deformation retraction to $h_\V^{-1}(t)$.
Therefore, $H_1(\bigcup_{j=i}^\ell M^j_t) = H_1(h_\V^{-1}(t))$. 
The $t$-level set $h_\V^{-1}(t)$ is the intersection of a line with $\bigcup_{j=1}^\ell M^j_t$, so $h_\V^{-1}(t)$ is a set of disjoint line segments. 
Each line segment is contractible, so $H_1(h_\V^{-1}(t)) = 0$.
\end{proof}

\begin{lemma}\label{lem:MV}
    Let $M\subset \R^2$ be a star-shaped object, $\V\in\bS^1$, and $c$ a center of $M$.
    For all $i \in \{0, \ldots, \ell - 1\}$ and $t < h_\V(c)$, the map
    \begin{equation*}
        \widetilde{H_0}(M^i_t) \bigoplus \widetilde{H_0}\Big(\bigcup_{j = i+1}^\ell M^j_t\Big) \xrightarrow[]{\iota^{i, t}_* - \eta^{i, t}_*} \widetilde{H_0}\Big( \bigcup_{j = i}^\ell M^j_t\Big)
    \end{equation*}
    is an isomorphism.
\end{lemma}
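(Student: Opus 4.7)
The approach is to invoke the Mayer--Vietoris long exact sequence in reduced homology for the closed cover $\bigcup_{j=i}^{\ell} M^j_t = A \cup B$, where $A := M^i_t$ and $B := \bigcup_{j=i+1}^{\ell} M^j_t$. The map in the lemma is (up to signs) exactly the Mayer--Vietoris map, so once the flanking terms of the sequence are controlled, the conclusion falls out by exactness.

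The first step is to compute $A \cap B$ geometrically. Since two sectors $M^j,M^{j'}$ that are not cyclically adjacent meet only at $c$, and since the hypothesis $t < h_\V(c)$ excludes $c$ from every sublevel set, the intersection collapses to $A \cap B = M^i_t \cap M^{i+1}_t = [c, x_{i+1}] \cap M_t$. Because $h_\V$ is linear on $[c, x_{i+1}]$ with $h_\V(c) > t$, this set is either empty or a single connected subsegment terminating at $x_{i+1}$; in particular $\widetilde{H_0}(A \cap B) = 0$ whenever it is non-empty.

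The case $A \cap B = \emptyset$, equivalently $h_\V(x_{i+1}) > t$, requires care because reduced Mayer--Vietoris is not well behaved for genuinely disjoint covers. Here I would exploit the unimodality of $h_\V$ along the cyclic vertex list of $\ch$ (a consequence of convexity): the active arc $x_1,\ldots,x_\ell$ has a unique interior minimum $x_k$. If $i+1 \le k$, then $h_\V(x_i) \ge h_\V(x_{i+1}) > t$, so every vertex of $S_i$ exceeds $t$ and $M^i_t = \emptyset$; if $i+1 \ge k$, then $h_\V(x_j) \ge h_\V(x_{i+1}) > t$ for all $j \ge i+1$, so $B = \emptyset$. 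In either subcase the map reduces to $\pm \mathrm{id}$ on a single summand, hence is trivially an isomorphism.

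In the remaining case $A \cap B \neq \emptyset$, I would write the relevant portion of the reduced Mayer--Vietoris sequence,
\[
H_1\Big(\bigcup_{j=i}^{\ell} M^j_t\Big) \longrightarrow \widetilde{H_0}(A \cap B) \longrightarrow \widetilde{H_0}(A) \oplus \widetilde{H_0}(B) \xrightarrow{\iota^{i,t}_* - \eta^{i,t}_*} \widetilde{H_0}(A \cup B) \longrightarrow 0,
\]
and apply \Cref{lem:H1_zero} to kill the left term and the connectedness of the intersection to kill $\widetilde{H_0}(A\cap B)$. Exactness then delivers the desired isomorphism. The main obstacle is the empty-intersection analysis: one must carefully confirm that the canonical enumeration together with the convexity of $\ch$ genuinely rules out any configuration in which both $A$ and $B$ are non-empty yet $A \cap B = \emptyset$, since in such a pathological configuration the reduced Mayer--Vietoris argument would fail by an off-by-one dimension count.
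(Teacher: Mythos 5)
Your proposal is correct and follows essentially the same route as the paper's proof: the reduced Mayer--Vietoris sequence for $A = M^i_t$ and $B = \bigcup_{j=i+1}^\ell M^j_t$, with \Cref{lem:H1_zero} killing the $H_1$ term and the identification of $A \cap B$ with the connected subsegment of $[c, x_{i+1}]$ below level $t$ killing the $\widetilde{H_0}(A\cap B)$ term. The one place you diverge is that you explicitly treat the case $A \cap B = \emptyset$ (equivalently $h_\V(x_{i+1}) > t$), whereas the paper simply asserts that the intersection is non-empty; as stated that assertion fails for small $t$, so your unimodality argument showing that one of $A$, $B$ must then be empty (making the map trivially $\pm\mathrm{id}$) is a genuine and worthwhile tightening rather than redundant caution.
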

\begin{proof}
    From our star-shaped assumption and the ordering of the sectors (see discussion after \cref{ex_convex}), we know that $M^i_t \cap \bigcup_{j = i+1}^\ell M^j_t$ is non-empty.
    By \Cref{lem:H1_zero} and the reduced Mayer-Vietoris sequence \cite[Section 2.2, Mayer-Vietoris Sequences]{hatcher}, we have a short exact sequence
    \begin{equation*}
        0 \xrightarrow[]{} \widetilde{H_0}\Big(M^i_t \cap \bigcup_{j = i+1}^\ell M^j_t\Big) \to \widetilde{H_0}(M^i_t) \oplus \widetilde{H_0}\Big(\bigcup_{j = i+1}^\ell M^j_t\Big) \xrightarrow[]{\iota_*^{i,t} - \eta_*^{i,t}}\widetilde{H_0}\Big(\bigcup_{j=i}^\ell M^j_t\Big) \to 0\,.
    \end{equation*}
    Therefore, $\iota_*^{i,t} - \eta_*^{i,t}$ induces an isomorphism
    \begin{equation}\label{eq:quotient_iso}
        \widetilde{H_0}(M^i_t) \oplus \widetilde{H_0}\Big(\bigcup_{j = i+1}^\ell M^j_t\Big)/\widetilde{H_0}\Big(M^i_t \cap \bigcup_{j = i+1}^\ell M^j_t\Big)  \cong \widetilde{H_0}\Big(\bigcup_{j=i}^\ell M^j_t\Big)\,.
    \end{equation}
    For all $t$, the sector $M^0_t$ is only adjacent to $M^1_t$, the sector $M^\ell_t$ is only adjacent to $M^{\ell - 1}_t$, and for $i \in \{1, \ldots, \ell-1\}$, the sector $M^i_t$ is only adjacent to $M^{i-1}_t$ and $M^{i+1}_t$. Therefore,
    \begin{equation}\label{eq_R2_needed}
        M^i_t \cap \bigcup_{j = i+1}^\ell M^j_t = M^i_t \cap M^{i+1}_t\,,
    \end{equation}
    which is a line segment, hence connected, so $\widetilde{H_0}\Big(M^i_t \cap \bigcup_{j = i+1}^\ell M^j_t\Big) = 0$. 
    By \Cref{eq:quotient_iso}, the proof is complete.
\end{proof}

\begin{lemma}\label{lem:alpha_iso}
    Let $M\subset \R^2$ be a star-shaped object, $\V\in\bS^1$, and $c$ a center of $M$.
    For $t < h_\V(c)$, define $\alpha_t$ as in \Cref{eq:alpha}. For $t \geq h_\V(c)$, let $\alpha_t \equiv 0$.
    Then, for all $t\in\R$, the map $\alpha_t$ is an isomorphism.
\end{lemma}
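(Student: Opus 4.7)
The plan is to split the argument into the two cases in the piecewise definition of $\alpha_t$. For $t \geq h_\V(c)$, I would show that both the domain and codomain of $\alpha_t$ vanish, so that the zero map is trivially an isomorphism. The key observation is that, if $x \in M_t$, the segment from $x$ to $c$ lies in $M$ (because $c$ is a center of $M$) and every point on it has height at most $\max(h_\V(x), h_\V(c)) \leq t$, so the segment lies in $M_t$. Consequently, $M_t$ is star-shaped with center $c$, hence contractible, giving $\widetilde{H}_0(M_t) = 0$. The same reasoning applies to each sector $M^i_t$: since $S_i$ is convex and contains $c$, the sector $M^i = S_i \cap M$ is star-shaped with center $c$, so $M^i_t$ is star-shaped (and thus contractible) for $t \geq h_\V(c)$. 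Hence $\widetilde{H}_0(M^i_t) = 0$ for every $i$ in this regime, and both sides of $\alpha_t$ are zero.

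For $t < h_\V(c)$, I would show that $\alpha_t$ is a composition of isomorphisms. By the definition of $\ell$, the sectors $M^i$ with $i > \ell$ satisfy $M^i_t = \emptyset$, so the corresponding summands of $\bigoplus_{i=0}^m \widetilde{H}_0(M^i_t)$ are zero, and the initial direct sum collapses to $\bigoplus_{i=0}^\ell \widetilde{H}_0(M^i_t)$. Each successive factor in the telescoping composition defining $\alpha_t$ is precisely a map of the form $\iota^{i,t}_* - \eta^{i,t}_*$ treated in \Cref{lem:MV} (acting as the identity on the unchanged summands), and \Cref{lem:MV} guarantees that this map is an isomorphism. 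The final identification $\widetilde{H}_0\bigl(\bigcup_{i=0}^\ell M^i_t\bigr) \cong \widetilde{H}_0(M_t)$ holds because $M_t = \bigcup_{i=0}^\ell M^i_t$ when $t < h_\V(c)$, the remaining sectors contributing nothing below that height. Composing isomorphisms yields an isomorphism $\alpha_t$.

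The substantive work has already been absorbed into \Cref{lem:MV}, so at this stage there is no serious obstacle. The only remaining task is careful bookkeeping: tracking, at each intermediate step of the telescoping composition, which summand gets merged with which and verifying that the hypotheses of \Cref{lem:MV} are met at every stage. I expect this to be entirely mechanical.
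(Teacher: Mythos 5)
Your proposal is correct and follows essentially the same route as the paper: the case $t \geq h_\V(c)$ is handled by noting that $M_t$ and each $M^i_t$ are connected (you justify this via star-shapedness of the sublevel sets, which is a fine elaboration of the paper's one-line claim), and the case $t < h_\V(c)$ is the composition of the isomorphisms from \Cref{lem:MV}. No gaps.
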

\begin{proof}
For $t < h_\V(c)$, the map $\alpha_t$ is a composition of isomorphisms by \Cref{lem:MV}. 
For $t \geq h_\V(c)$, the set $M_t$ is connected and the sets $M^i_t$ are connected for each $t$, so $\widetilde{H_0}(M_t) = 0$ and $\widetilde{H_0}(M^i_t) = 0$ for each $i$.
\end{proof}

\begin{lemma}\label{lem:inclusions_commute}
    Let $X$ be a topological space with filtration $\{X_t\}$. 
    Let $A \subset X$ be a subspace and define $A_t \coloneqq X_t \cap A$. 
    If we define the inclusion maps
    \begin{align*}
        i^t &\colon A_t \hookrightarrow X_t \\
        j^{st} &\colon A_s \hookrightarrow A_t \\
        k^{st} &\colon X_s \hookrightarrow X_t \,,
    \end{align*}
    then the following diagram commutes for all $s \leq t$:
    \begin{equation*}
    \begin{tikzcd}
        \widetilde{H_0}(A_s) \arrow[r, "i^s_*"] \arrow[d, "j^{st}_*"'] & \widetilde{H_0}(X_s) \arrow[d, "k^{st}_*"] \\
        \widetilde{H_0}(A_t) \arrow[r, "i^t_*"']                       & \widetilde{H_0}(X_t)             
    \end{tikzcd}
    \end{equation*}
\end{lemma}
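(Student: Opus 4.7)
The plan is to observe that the diagram already commutes at the level of topological spaces, and then invoke functoriality of reduced degree-$0$ homology. Concretely, both compositions $j^{st} \circ (\text{nothing})$... let me restate: the two compositions $k^{st} \circ i^s \colon A_s \hookrightarrow X_s \hookrightarrow X_t$ and $i^t \circ j^{st} \colon A_s \hookrightarrow A_t \hookrightarrow X_t$ are literally equal as maps of topological spaces, since each one is just the inclusion $A_s \hookrightarrow X_t$ (the definition $A_s = X_s \cap A$ ensures $A_s \subset X_s \subset X_t$ and also $A_s \subset A_t \subset X_t$). Applying the homology functor to a single map produces a single linear map, so $(k^{st} \circ i^s)_* = (i^t \circ j^{st})_*$, and functoriality $(f \circ g)_* = f_* \circ g_*$ (which holds for $\widetilde{H_0}$ just as for unreduced $H_0$, as noted in the paper's definition of the induced maps) finishes the proof.

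If one prefers a direct verification using the concrete description of $\widetilde{H_0}$ given in the preliminaries, the argument is equally short. An element of $\widetilde{H_0}(A_s)$ is a formal sum $\sum_i \lambda_i [x_i]_{A_s}$ with $\sum_i \lambda_i = 0$. Chasing through the diagram, the composition $k^{st}_* \circ i^s_*$ sends this to $\sum_i \lambda_i [x_i]_{X_t}$, and the composition $i^t_* \circ j^{st}_*$ sends it to the same $\sum_i \lambda_i [x_i]_{X_t}$, since at each step the inclusion-induced map only relabels the ambient space in which the component $[x_i]$ is taken. The coefficient condition $\sum_i \lambda_i = 0$ is preserved throughout, so everything is well-defined.

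There is no real obstacle here: the lemma is a purely formal functoriality statement of the type used implicitly many times in persistent homology, recorded explicitly because it will be invoked later to check that the various inclusion-induced isomorphisms of Lemmas \ref{lem:H1_zero}--\ref{lem:alpha_iso} assemble into a morphism of persistence modules (i.e., commute with the transition maps $\iota^t_s$). The only thing one should be slightly careful about is that $A_s = X_s \cap A$ so that the four inclusions genuinely fit into a commuting square of spaces; this is immediate from the definitions.
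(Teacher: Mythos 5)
Your proof is correct and is essentially identical to the paper's: both observe that $k^{st} \circ i^s$ and $i^t \circ j^{st}$ are literally the same inclusion $A_s \hookrightarrow X_t$ and then apply functoriality of $\widetilde{H_0}$. The additional explicit chase through formal sums of connected components is a fine (if unnecessary) elaboration of the same point.
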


\begin{proof}
    The functions $i^t \circ j^{st}$ and $k^{st} \circ i^s$ are both equal to the inclusion $A_s \xhookrightarrow{} X_t$, so $i_t^* \circ j^{st}_* = k_*^{st} \circ i_*^s$.
\end{proof}

\begin{lemma}\label{lem:commute}
    Let $M\subset \R^2$ be a star-shaped object, $\V\in\bS^1$, and $c$ a center of $M$.
    For $t < h_\V(c)$, define $\alpha_t$ as in \Cref{eq:alpha}. For $t \geq h_\V(c)$, let $\alpha_t \equiv 0$. Then the following diagram commutes for all $s \leq t$:
    \begin{equation*}
        \begin{tikzcd}
            \displaystyle\bigoplus_{i=0}^m\widetilde{H_0}(M_s^i) \arrow[r, "\alpha_s"] \arrow[d, "\oplus_i \psi_{st}^i"'] & \widetilde{H_0}(M_s) \arrow[d, "\phi_{st}"] \\
            \displaystyle\bigoplus_{i=0}^m\widetilde{H_0}(M_t^i) \arrow[r, "\alpha_t"']                                   & \widetilde{H_0}(M_t)\,,                      
        \end{tikzcd}
    \end{equation*}
    where $\psi^i_{st}\colon \widetilde{H_0}(M^i_s) \to \widetilde{H_0}(M^i_t)$ is the map induced by the inclusion $M^i_s \xhookrightarrow{} M^i_t$ and $\phi_{st}\colon \widetilde{H_0}(M_s) \to \widetilde{H_0}(M_t)$ is the map induced by the inclusion $M_s \xhookrightarrow{} M_t$.
\end{lemma}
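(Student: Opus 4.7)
The plan is to split into cases by the relation of $s, t$ to $h_\V(c)$, with the main content in the case $s \le t < h_\V(c)$. If $t \ge h_\V(c)$, then $M_t$ contains $c$ and, by the star-shaped property (every segment from $c$ to a point of $M_t$ stays in $M_t$ because $h_\V$ is linear along such a segment), $M_t$ is path-connected, so $\widetilde{H_0}(M_t) = 0$ and the right vertical composite $\phi_{st} \circ \alpha_s$ is automatically zero; on the other side, $\alpha_t \equiv 0$ by definition, so the square commutes trivially. This disposes of the subcases $s < h_\V(c) \le t$ and $h_\V(c) \le s \le t$ at once.

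For the main case $s \le t < h_\V(c)$, I would show commutativity by building a ladder of $\ell + 1$ commuting squares joining the composition defining $\alpha_s$ to the composition defining $\alpha_t$. Each rung of the ladder corresponds to one factor in the composition \eqref{eq:alpha}, i.e.\ to one index $i \in \{0, 1, \ldots, \ell-1\}$; on that rung the horizontal arrows are $\iota^{i, s}_* - \eta^{i, s}_*$ on top and $\iota^{i, t}_* - \eta^{i, t}_*$ on the bottom, and the vertical arrows are the direct sums of inclusion-induced maps from level $s$ to level $t$, with the identity on those summands that are merely passed along (including the summands $\widetilde{H_0}(M^i_t)$ for $i > \ell$, which vanish since $M^i_t = \emptyset$ for $t < h_\V(c)$).

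The commutativity of each rung square reduces to two applications of \Cref{lem:inclusions_commute}: one with $A = M^i$ and $X = \bigcup_{j=i}^\ell M^j$ to handle the $\iota^{i,*}_*$ part, and one with $A = \bigcup_{j=i+1}^\ell M^j$ and $X = \bigcup_{j=i}^\ell M^j$ to handle the $\eta^{i,*}_*$ part; subtracting the two commuting squares produces the one we need. The final identification $\widetilde{H_0}(\bigcup_{i=0}^\ell M_t^i) \oplus \bigoplus_{i = \ell+1}^m \widetilde{H_0}(M^i_t) \cong \widetilde{H_0}(M_t)$ used at the bottom of \eqref{eq:alpha} is itself natural in $t$ by one more application of \Cref{lem:inclusions_commute}, with $A = \bigcup_{i=0}^\ell M^i$ and $X = M$ (noting that $M_t = \bigcup_{i=0}^\ell M^i_t$ for $t < h_\V(c)$). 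Pasting all $\ell + 1$ commuting squares vertically gives the outer diagram.

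The main obstacle is not a deep one but is purely bookkeeping: one must track how the direct-sum decompositions at each intermediate stage align between levels $s$ and $t$, and make sure that the summands not participating in a given rung are treated by the appropriate identity maps. No new ingredient beyond the naturality of inclusion-induced maps on reduced $H_0$ is needed; the proof is a diagram chase that chains together the naturality statements supplied by \Cref{lem:inclusions_commute}.
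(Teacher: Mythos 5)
Your proposal is correct and follows essentially the same route as the paper: the same case split on $t$ versus $h_\V(c)$, with the main case reduced to commutativity of each intermediate square via two applications of \Cref{lem:inclusions_commute} (one for $\iota^{i,\cdot}_*$ and one for $\eta^{i,\cdot}_*$). Your extra care with the final isomorphism and the identity maps on bystander summands is sound bookkeeping that the paper leaves implicit.
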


\begin{proof}
    \underline{Case 1: ($t \geq h_\V(c)$)}: If $t \geq h_\V(c)$, then $M_t$ is connected so $\widetilde{H_0}(M_t) = 0$. Therefore $\alpha_t \circ \oplus_i \psi_{st}^i = 0 = \phi_{st} \circ \alpha_s$.

    \vspace{3mm}

    \underline{Case 2: ($t < h_\V(c)$)}: By definition of $\alpha_t$ as the composition of maps $\iota_*^{i, t} - \eta_*^{i, t}$, it suffices to show that for $i \in \{0, \ldots, \ell-1\}$, the following diagram commutes:

    \begin{equation}\label{eq:reduced_diagram}
        \begin{tikzcd}
\widetilde{H_0}(M^i_s) \oplus \widetilde{H_0}\Big(\bigcup_{j = i+1}^\ell M^j_s\Big) \arrow[d, "\psi^i_{st} \oplus \xi^{i+1}_{st}"'] \arrow[r, "\iota_*^{is} - \eta_*^{is}"] & \widetilde{H_0}\Big(\bigcup_{j=i}^\ell M^j_s\Big) \arrow[d, "\xi^i_{st}"] \\
\widetilde{H_0}(M^i_t) \oplus \widetilde{H_0}\Big(\bigcup_{j = i+1}^\ell M^j_t\Big) \arrow[r, "\iota_*^{it} - \eta_*^{it}"']                                                & \widetilde{H_0}\Big(\bigcup_{j=i}^\ell M^j_t\Big)                        
\end{tikzcd}
    \end{equation}
    where $\xi^i_{st}\colon \widetilde{H_0}\Big(\bigcup_{j = i}^\ell M^j_s\Big) \to \widetilde{H_0}\Big(\bigcup_{j = i}^\ell M^j_t\Big)$ is induced by the inclusion $\bigcup_{j=i}^\ell M^j_s \hookrightarrow \bigcup_{j=i}^\ell M^j_t$. 
    To see why this is sufficient, note that $\xi^{\ell}_{st} = \psi^\ell_{st}$ and $\xi^0_{st} = \phi^{st}$.

    By \Cref{lem:inclusions_commute}, the following diagrams commute:
    \begin{equation*}
        \begin{tikzcd}
\widetilde{H_0}(M^i_s)  \arrow[d, "\psi^i_{st}"'] \arrow[r, "\iota_*^{i,s}"] & \widetilde{H_0}\Big(\bigcup_{j=i}^\ell M^j_s\Big) \arrow[d, "\xi^i_{st}"] &  & \widetilde{H_0}\Big(\bigcup_{j = i+1}^\ell M^j_s\Big) \arrow[d, "\xi^{i+1}_{st}"'] \arrow[r, "\eta_*^{is}"] & \widetilde{H_0}\Big(\bigcup_{j=i}^\ell M^j_s\Big) \arrow[d, "\xi^i_{st}"] \\
\widetilde{H_0}(M^i_t) \arrow[r, "\iota_*^{it}"']                           & \widetilde{H_0}\Big(\bigcup_{j=i}^\ell M^j_t\Big)                         &  & \widetilde{H_0}\Big(\bigcup_{j = i+1}^\ell M^j_t\Big) \arrow[r, "\eta_*^{i,t}"]                              & \widetilde{H_0}\Big(\bigcup_{j=i}^\ell M^j_t\Big)\,,        
\end{tikzcd}
\end{equation*}
which implies that the diagram in \Cref{eq:reduced_diagram} commutes, completing the proof.
\end{proof}

\begin{theorem}\label{thm:sector_decomposition}
    Let $M\subset \R^2$ be a star-shaped object and denote its sectors by $\{M^i\}_{i=0}^m$. 
    Then, for all $\V \in \bS^1$,
    \[
    \bigoplus_{i=0}^m \widetilde{\text{PH}_0}(M^i, \V)\cong \widetilde{\text{PH}_0}(M, \V) \, .
    \]
\end{theorem}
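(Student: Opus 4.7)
The plan is to assemble \Cref{lem:alpha_iso} and \Cref{lem:commute} into an isomorphism of persistence modules. Recall that the direct sum persistence module $\bigoplus_{i=0}^m \widetilde{\text{PH}_0}(M^i, \V)$ has, at each filtration value $t$, underlying vector space $\bigoplus_{i=0}^m \widetilde{H_0}(M^i_t)$ and transition maps $\bigoplus_{i=0}^m \psi^i_{st}$. Under this identification, the commutative square in \Cref{lem:commute} is exactly the naturality condition required for $\{\alpha_t\}_{t \in \R}$ to be a morphism of persistence modules from $\bigoplus_{i=0}^m \widetilde{\text{PH}_0}(M^i, \V)$ to $\widetilde{\text{PH}_0}(M, \V)$.

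Combined with \Cref{lem:alpha_iso}, which establishes that each $\alpha_t$ is a vector-space isomorphism, this produces an isomorphism of persistence modules in the sense of the definition given earlier in the paper, and the theorem follows immediately. If desired, one can further invoke the uniqueness part of \Cref{thm:intervaldecomposition} to translate the isomorphism of persistence modules into the equality of the associated persistence diagrams (up to reordering of intervals).

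There is essentially no remaining obstacle: the heavy lifting is already distributed across the preceding lemmas. \Cref{lem:H1_zero} supplies the vanishing of $H_1$ needed to apply the reduced Mayer--Vietoris sequence; \Cref{lem:MV} converts this into a single-step isomorphism at each inclusion pair; \Cref{lem:alpha_iso} iterates these into a global pointwise isomorphism $\alpha_t$; and \Cref{lem:commute} checks that the $\alpha_t$ intertwine the transition maps induced by the inclusions $M^i_s \hookrightarrow M^i_t$ and $M_s \hookrightarrow M_t$. The only subtlety worth flagging is the planar hypothesis $M \subset \R^2$, which entered in \Cref{eq_R2_needed} to ensure that consecutive sectors in the chosen ordering meet in a single connected line segment (so that the reduced Mayer--Vietoris intersection term vanishes); this is precisely what fails in higher dimensions and is why the theorem is stated for $\R^2$.
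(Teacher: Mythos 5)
Your argument is exactly the paper's proof: combine \Cref{lem:alpha_iso} (each $\alpha_t$ is a vector-space isomorphism) with \Cref{lem:commute} (the $\alpha_t$ intertwine the transition maps) to conclude that $\{\alpha_t\}$ is an isomorphism of persistence modules. The proposal is correct and takes the same approach, with the additional (accurate) remark about where the planar hypothesis enters via \Cref{eq_R2_needed}.
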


\begin{proof}
    Let $\alpha_t$ be defined as in \Cref{eq:alpha} for $t < h_\V(c)$, with $\alpha_t \equiv 0$ for $t \geq h_\V(c)$. By \Cref{lem:alpha_iso}, $\alpha_t$ is an isomorphism of vector spaces at each $t$. By \Cref{lem:commute}, it commutes with the structure maps of each persistence module, so it is an isomorphism of persistence modules.
\end{proof} 

In order to prove \cref{thm:sector_decomposition}, we crucially needed that, in \cref{eq_R2_needed}, the intersections are connected. 
This is in general not true for star-shaped objects in $\R^d$ for $d>2$, where a sector may intersect with more than two other sectors.
This is not a shortcoming of our approach; rather, as we show in \cref{no_higherdim_LV}, \cref{thm:sector_decomposition} does not hold in higher ambient dimension without additional assumptions.

Nevertheless, there are star-shaped objects, in arbitrary ambient dimensions, for which \cref{thm:sector_decomposition} does hold. 
Since in what follows we need the decomposition of the $\Dgm_0$ into sectors, but not necessarily the restriction to an ambient dimension of $2$, we provide the following definition.

\begin{definition} \label{def:sectorial}
A star-shaped object $M$ in $\R^d$ is called \define{sectorial} if 
\[
\bigoplus_{i=0}^n \widetilde{\text{PH}_0}(M^i, \V)\cong\widetilde{\text{PH}_0}(M, \V)
\]
for all $\V \in \bS^1$, where $\{M^i\}_{i=0}^n$ are the sectors of $M$.
\end{definition}

\begin{figure}[h!]
    \centering
    \begin{subfigure}[t]{1\textwidth}
    \centering
    \includegraphics[scale=.12]{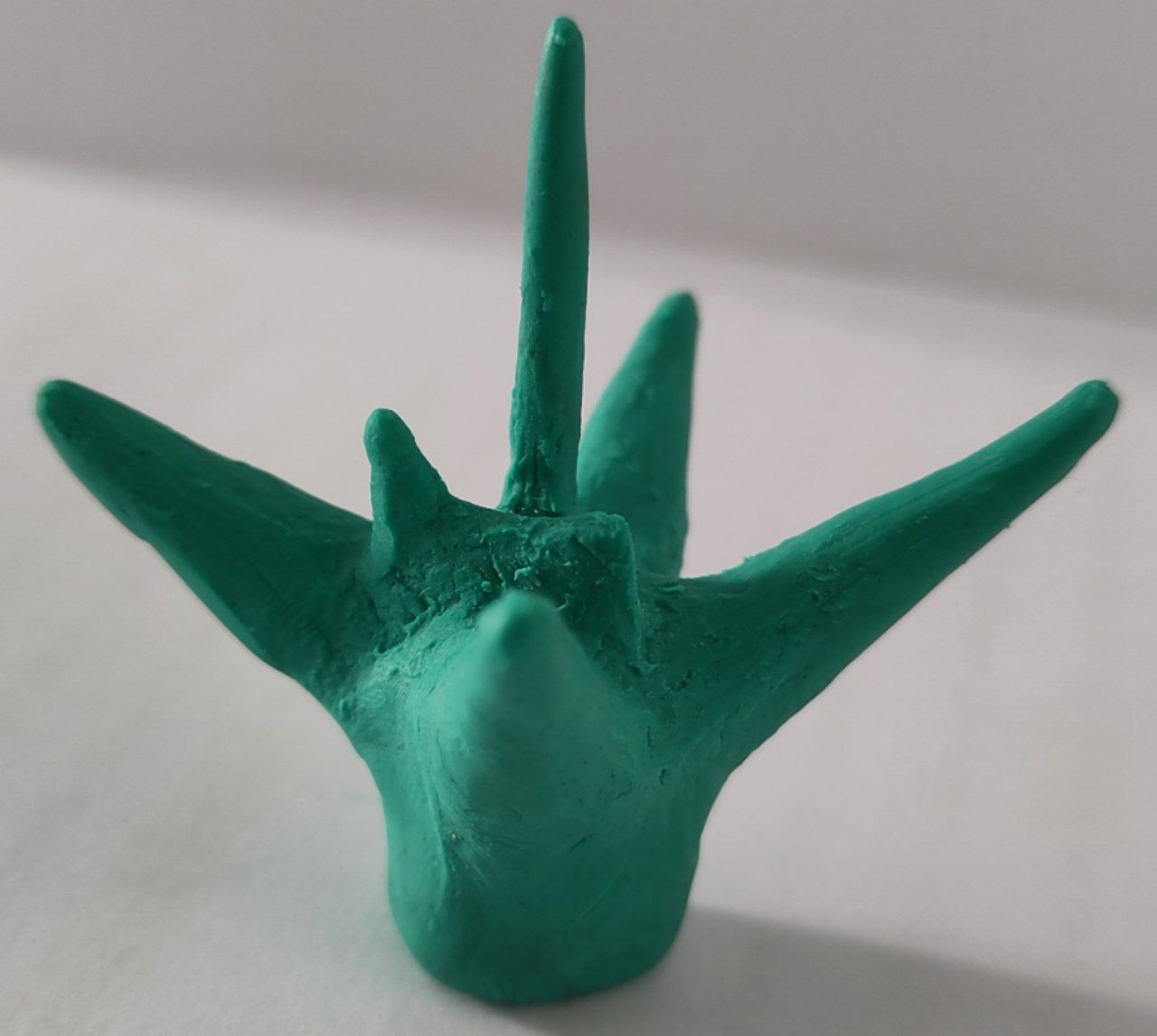}
\end{subfigure}%
    \caption{3D object in modeling clay showing that~\cref{thm:sector_decomposition} does not fully generalize in $\R^3$. Its contours in the vertical direction (from top to bottom) are described in \cref{no_higherdim_LV} and depicted in \cref{fig:no_higher_LV}.}
    \label{fig:geom_realization_noLV}
\end{figure}

\begin{figure}[!h]
\begin{minipage}{0.5\textwidth}
\centering
  \begin{subfigure}[t]{.9\textwidth}
    \centering
    \includegraphics[scale=.45]{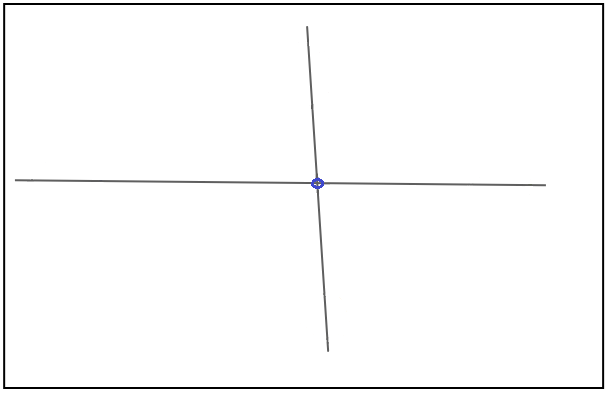}
    \caption{}
  \end{subfigure}
  \begin{subfigure}[t]{.9\textwidth}
    \centering
    \includegraphics[scale=.45]{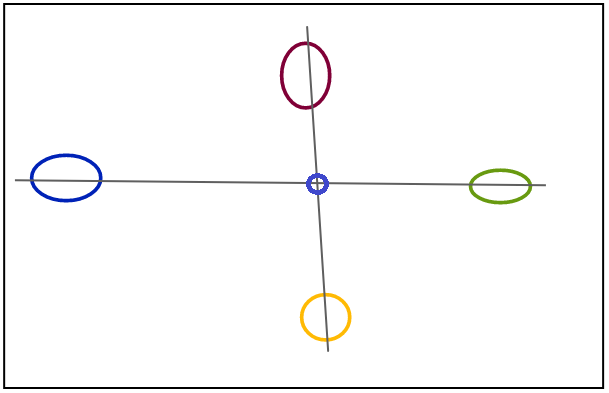}
    \caption{}
  \end{subfigure}
  \begin{subfigure}[t]{.9\textwidth}
    \centering
    \includegraphics[scale=.45]{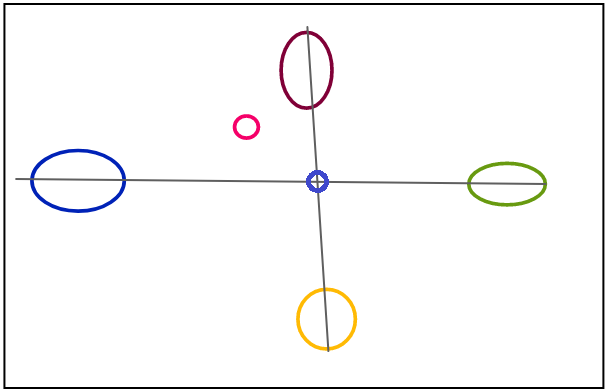}
    \caption{}
  \end{subfigure}
\end{minipage}
\begin{minipage}{0.5\textwidth}
\centering
  \begin{subfigure}[t]{.9\textwidth}
    \centering
    \includegraphics[scale=.45]{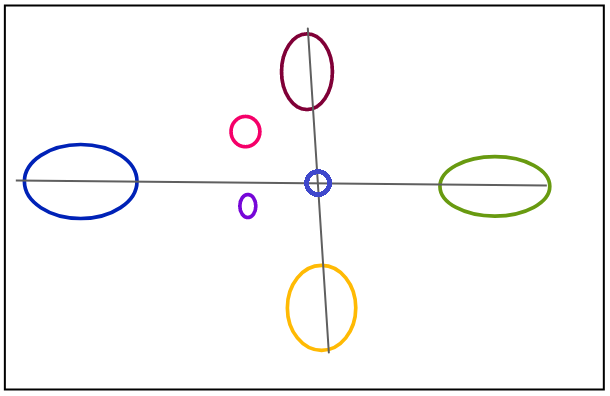}
    \caption{}
  \end{subfigure}
  \begin{subfigure}[t]{.9\textwidth}
    \centering
    \includegraphics[scale=.45]{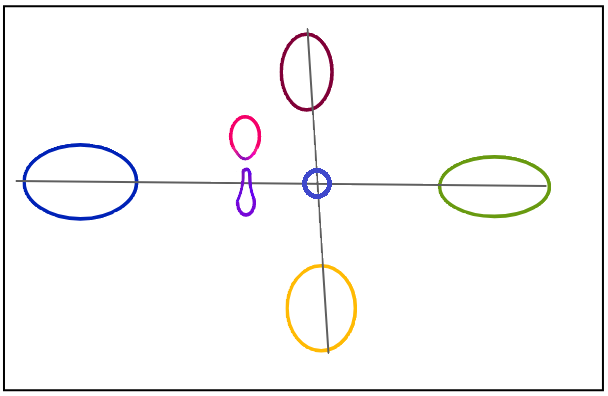}
    \caption{}
  \end{subfigure}
  \begin{subfigure}[t]{.9\textwidth}
    \centering
    \includegraphics[scale=.45]{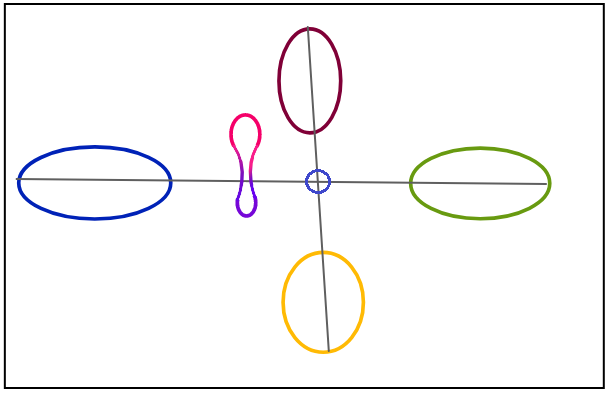}
    \caption{}
  \end{subfigure}
\end{minipage}
\caption{The level-set contours of the star-shaped object $M \subset \R^3$ that is shown in~\cref{fig:geom_realization_noLV}, demonstrating that \cref{thm:sector_decomposition} does not, in general, hold in higher dimensions. In (a)--(f), we show the $t$-levelset contours of $h_\V \colon M \to \R$ for increasing $t$, where $\V = (0, 0, -1)$. The gray lines represent the boundaries of the sectors. }
    \label{fig:no_higher_LV}
\end{figure}

\begin{example}\label{no_higherdim_LV}
\cref{fig:no_higher_LV} illustrates the contours of the sublevel sets of the star-shaped object in $\R^3$ depicted in \cref{fig:geom_realization_noLV}, where the height function is moving downward. 
The straight lines represent the division of sectors.
At the beginning, we have one connected component.
Then four other connected components enter at the same time. 
Then we have the birth of a sixth component (pink) inside a sector, followed by the birth of a seventh component (purple) in another sector. 
These two components merge, but the merging point is not in the same sector as the birth point of the seventh-born purple component, so $\widetilde{\Dgm}_0(M, \V)$ has a point that is not contained in $\widetilde{\Dgm}_0(M^i, \V)$ for any sector $M^i$.
\end{example}

\section{The PHTs of star shapes have trivial geometric monodromy}\label{sec:monodromy}

In this section, we investigate a class of objects whose PHTs have trivial geometric monodromy. 
By contrast, recall from \Cref{sec:monodromy_def} that in general, the PHT of a shape $M$ need not have trivial monodromy; see \Cref{fig:spiral} for an example. 
Our main results (\Cref{prop:no_monodromy_suff_conditions} and \Cref{thm:no_monodromy}) assume that $M \subset \R^2$, but some of our proofs generalize straightforwardly to arbitrary dimension $d$, so we present those in generality.

We first show (\Cref{prop:no_monodromy_suff_conditions}) that if $M$ is a star-shaped object in $\mathbb{R}^2$ such that its persistence diagrams are ``simple'' (\Cref{def:simple_PD}) then its PHT has trivial monodromy.
We discuss an obstruction in higher dimensions in \cref{ex:higher_spiral}.  

We then conclude by showing that if $M$ is a star-shaped polygon in $\R^2$ whose vertices are in general position, then $M$ has simple persistence diagrams. Consequently, the PHT of $M$ has trivial geometric monodromy.
\medskip 

Recall the sectors $\{M^i\}$ of a star-shaped object $M\subset\R^d$ as defined in \Cref{sec:star_shaped_decomposition} and that all star-shaped objects we consider here have as a convex hull a convex polytope

\begin{lemma}\label{lem:finite_support_Mi}
Let $M$ be a star-shaped object in $\R^d$ and let $c$ be a center of $M$.
For each sector $M^i$, there exists a direction $\V\in \bS^{d-1}$ such that
\begin{equation*}
    \Dgm_0(M^i, \V) = \{(h_{\V}(c), \infty)\} \cup \Delta \, .
\end{equation*}
\end{lemma}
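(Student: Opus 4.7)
The plan is to choose a direction $\V$ so that $c$ becomes the strict global minimum of $h_\V$ restricted to $M^i$, and then exploit the fact that $M^i$ is star-shaped with center $c$ to show that every nonempty sublevel set of $h_\V|_{M^i}$ is path-connected. Granting this, the degree-$0$ persistence module of the filtration $\{M^i_{\V, t}\}_{t\in\R}$ is isomorphic to the interval module $\mathcal{I}_{[h_\V(c), \infty)}$, whose persistence diagram is exactly $\{(h_\V(c), \infty)\} \cup \Delta$.

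To construct such a $\V$, I would use that $M^i \subset S_i = \text{conv}(\sigma_i \cup \{c\})$. Every $x \in S_i$ admits a presentation $x = \beta y + (1-\beta) c$ with $\beta \in [0, 1]$ and $y \in \sigma_i$, so $x - c = \beta(y - c)$. It therefore suffices to find $\V$ satisfying $\langle \V, y - c\rangle > 0$ for all $y \in \sigma_i$, since any such $\V$ gives $\langle \V, x - c\rangle \geq 0$ on $M^i$ with equality only at $x = c$. In the generic case $c \notin \sigma_i$, the compact convex set $\sigma_i - c$ avoids the origin, so the strict separating hyperplane theorem furnishes the desired $\V$. In the degenerate case $c \in \sigma_i$, we have $S_i = \sigma_i$ and $c$ is an extreme point of the polytope $\sigma_i$, and a supporting-hyperplane at $c$ provides the required direction.

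Next, I would analyze the sublevel sets $M^i_{\V, t} = \{x \in M^i \mid h_\V(x) \leq t\}$. The strict minimality of $c$ immediately yields $M^i_{\V, t} = \emptyset$ for $t < h_\V(c)$. For $t \geq h_\V(c)$, the set $M^i_{\V, t}$ contains $c$ and is star-shaped with center $c$: given $x \in M^i_{\V, t}$, the segment $[c, x]$ lies in $M$ by star-shapedness of $M$ with center $c$ and in $S_i$ by convexity of $S_i$, hence in $M^i$; moreover any $p = (1-s)c + sx$ on this segment satisfies $h_\V(p) = (1-s)h_\V(c) + s h_\V(x) \leq t$, so $p \in M^i_{\V, t}$. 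Each nonempty sublevel set is therefore path-connected, so $H_0(M^i_{\V, t}) \cong \mathbb{F}$, and all inclusion-induced maps among these $H_0$ groups are identities.

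Combining these observations, the persistence module $\text{PH}_0(M^i, \V)$ is isomorphic to $\mathcal{I}_{[h_\V(c), \infty)}$, and \Cref{thm:intervaldecomposition} gives $\Dgm_0(M^i, \V) = \{(h_\V(c), \infty)\} \cup \Delta$. The main technical subtlety is the existence of $\V$ in the degenerate case $c \in \sigma_i$; aside from that, the argument is a clean combination of the star-shaped structure of $M^i$ with a single application of the separating hyperplane theorem.
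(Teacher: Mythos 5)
Your treatment of the main case $c \notin \sigma_i$ is correct and essentially the paper's argument in different clothing: the paper takes $\V$ along the perpendicular dropped from $c$ to the hyperplane containing $\sigma_i$ and checks the angle condition directly, while you invoke strict separation of the compact convex set $\sigma_i - c$ from the origin; both yield $h_\V \geq h_\V(c)$ on $M^i$, and your observation that each nonempty sublevel set $M^i_{\V,t}$ is star-shaped about $c$ (hence connected) correctly delivers the interval module $\mathcal{I}_{[h_\V(c),\infty)}$.

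The gap is in the degenerate case $c \in \sigma_i$. You assert that $c$ is an extreme point of the polytope $\sigma_i$, but nothing in the definitions forces this: a center of $M$ may lie anywhere in $M$, including in the relative interior of a face of $\ch$. (Take $M$ a filled triangle and $c$ the midpoint of one edge; $M$ is star-shaped with this center, $c$ lies on the facet $\sigma_i$ given by that edge, and $c$ is not a vertex of it.) When $c$ is not an extreme point of $\sigma_i$, no direction whatsoever makes $c$ a strict minimum of $h_\V$ on $\sigma_i$: along any segment of $\sigma_i$ having $c$ in its relative interior, the linear function $\ip{\V}{\cdot}$ is either constant or dips below $h_\V(c)$. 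So your strategy of forcing a strict minimum cannot be repaired in this case. The repair is to notice that strictness is never actually used: the only consequences you need are that $M^i_{\V,t}=\emptyset$ for $t<h_\V(c)$, which requires only the weak inequality $h_\V \geq h_\V(c)$ on $M^i$, together with connectedness of the nonempty sublevel sets. In the degenerate case, take $\V$ orthogonal to the affine hull of $\sigma_i$, as the paper does; then $h_\V \equiv h_\V(c)$ on $M^i \subset \sigma_i$, the sublevel sets are empty for $t<h_\V(c)$ and equal to the connected set $M^i$ for $t\geq h_\V(c)$, and the conclusion follows from your own star-shapedness argument.
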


\noindent In other words, there exists a direction for which the reduced degree-$0$ persistent homology of a sector is trivial.

\begin{proof}
Let $\sigma_i$ be the polytope on $\ch$ defining the sector $M^i$. 
\vspace{3mm}

\noindent \underline{Case 1:} If $c$ belongs to $\sigma_i$, then $M^i = \sigma_i$. Let $\V$ be a unit vector that is orthogonal to the hyperplane $P$ containing $\sigma_i$. Then $h_{\V}(z) = h_{\V}(c)$ for all $z \in M^i$.
\vspace{3mm}

\noindent \underline{Case 2:} If $c$ does not belong to $\sigma_i$, consider the line $L_{c\sigma_i}$ passing through $c$ and perpendicular to $\sigma_i$. 
Let $y\coloneqq L_{c\sigma_i}\cap\sigma_i$.
Let $\V = (y - c)/\norm{y - c}$. 
For all points $z \in M^i$, the angle $\theta$ between $\V$ and $z - c$ is at most $\pi/2$. 
Consequently,
\begin{equation*}
    h_\V(z) - h_\V(c) = \ip{\V}{(z - c)} = \norm{\V}\norm{z - c} \cos(\theta) \geq 0
\end{equation*}
for all points in $z\in M^i$.

\vspace{3mm}
In either case, $M^i_t = \emptyset$ for $t < h_\V(c)$ and $M^i_t$ is connected for all $t \geq h_\V(c)$, thus proving our claim.
\end{proof}

In order to apply the definition of trivial geometric monodromy we will further need to assume that the PHT of our shape $M$ is a simple persistence diagram bundle (\Cref{def:simple_pdb}).

\begin{definition}\label{def:simple_PD}
    An object $M$ in $\R^d$ has \define{simple $\Dgm_0$} if, for all $\V \in \bS^{d-1}$, the persistence diagram $\Dgm_0(M, \V)$ has no points of multiplicity $> 1$ and has only finitely many points.
\end{definition}

In the following results, we study the total space $E \subset \bS^{d-1} \times (\R^2\cup \Delta)$ (see \Cref{def:pdb}) of the reduced persistent homology transform. 
Our goal is to understand the connected components of $E \setminus (\bS^{d-1} \times \Delta )$.

\begin{lemma}\label{lem:vine_within_sector}
Assume that $M\subset \R^d$ is star-shaped, sectorial, and has simple $\Dgm_0$.
If $\gamma$ is a connected component in the set
    \begin{equation*}
        \{(\V, z) \mid \V \in \bS^{d-1}\, , \, z \in \widetilde{\Dgm}_0(M, \V) \setminus \Delta \}\,,
    \end{equation*}
    then there is an $i$ such that
    \begin{equation*}
        \gamma \subset \{(\V, z) \mid \V \in \bS^{d-1}\, , \, z \in \widetilde{\Dgm}_0(M^i, \V) \setminus \Delta \}\,.
    \end{equation*}
\end{lemma}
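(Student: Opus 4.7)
The plan is to define a ``sector label'' function $\sigma$ on the off-diagonal total space and prove it is locally constant, hence constant on each connected component $\gamma$. Because $M$ is sectorial, \Cref{def:sectorial} gives $\widetilde{\Dgm}_0(M,\V) = \bigsqcup_{i=0}^m \widetilde{\Dgm}_0(M^i,\V)$ as multisets for every $\V \in \bS^{d-1}$. The simplicity hypothesis (\Cref{def:simple_PD}) forces every off-diagonal point to have multiplicity one, so the sector sub-diagrams share no off-diagonal points. Consequently, for each $(\V,z)$ with $z \in \widetilde{\Dgm}_0(M,\V) \setminus \Delta$, there is a unique index $\sigma(\V,z) \in \{0,\ldots,m\}$ such that $z \in \widetilde{\Dgm}_0(M^{\sigma(\V,z)},\V)$. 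Showing that $\sigma$ is locally constant on the total space will immediately yield the claim.

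To prove local constancy at a point $(\V_0, z_0)$ with $\sigma(\V_0, z_0) = i$, I would first use that $\widetilde{\Dgm}_0(M,\V_0)$ is finite and simple to choose $\epsilon > 0$ smaller than one third of both $\|z_0 - \Delta\|_\infty$ and the $\ell_\infty$-distance from $z_0$ to every other off-diagonal point of $\widetilde{\Dgm}_0(M,\V_0)$. By \Cref{rem: LipschitzPHT} applied to $M$ and to each sector $M^j$, there is a neighborhood $U$ of $\V_0$ on which
\[
    d_B\bigl(\widetilde{\Dgm}_0(M,\V), \widetilde{\Dgm}_0(M,\V_0)\bigr) < \epsilon \quad\text{and}\quad d_B\bigl(\widetilde{\Dgm}_0(M^j,\V), \widetilde{\Dgm}_0(M^j,\V_0)\bigr) < \epsilon
\]
for every $\V \in U$ and every $j$. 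The $\epsilon$-gap around $z_0$ then forces a unique point $z_\V^{(M)} \in \widetilde{\Dgm}_0(M,\V)$ and a unique point $z_\V^{(i)} \in \widetilde{\Dgm}_0(M^i,\V)$ within $\epsilon$ of $z_0$. Since $z_\V^{(i)}$ also lies in $\widetilde{\Dgm}_0(M,\V)$, uniqueness gives $z_\V^{(i)} = z_\V^{(M)}$.

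For the reverse direction, for $j \neq i$, the disjointness of sector sub-diagrams at $\V_0$ (from the first paragraph) means every off-diagonal point of $\widetilde{\Dgm}_0(M^j,\V_0)$ is at distance at least $3\epsilon$ from $z_0$, so the sector-level bottleneck bound rules out any off-diagonal point of $\widetilde{\Dgm}_0(M^j,\V)$ within $\epsilon$ of $z_0$. Therefore, for $(\V,z)$ in the total space with $\V \in U$ and $\|z - z_0\|_\infty < \epsilon$, we must have $z = z_\V^{(M)} = z_\V^{(i)}$ and hence $\sigma(\V,z) = i$, establishing local constancy.

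The main obstacle is coordinating two simultaneous bottleneck matchings and ruling out that a single off-diagonal point of $\widetilde{\Dgm}_0(M,\V)$ could be inherited from different sectors at nearby directions. The simplicity hypothesis is precisely what prevents this pathological ``sector swap'': without it, two off-diagonal points coming from different sectors could collide at $\V_0$ as one multiplicity-two point and then exchange sector identities, which would destroy the local constancy of $\sigma$. All other ingredients (stability of the PHT, finiteness of the diagram, disjointness of sector contributions) are directly supplied by earlier results.
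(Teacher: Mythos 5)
Your proof is correct and follows essentially the same route as the paper's: the paper partitions $\gamma$ into the disjoint sets $\mathcal{U}_j = \gamma \cap E_j$ and shows each is open in $\gamma$ via sector-level bottleneck stability and the gap guaranteed by simplicity, which is exactly your ``locally constant sector label'' argument in different packaging. The key ingredients (sectoriality giving the disjoint-union decomposition of the diagram, simplicity plus finiteness giving a positive gap, and Lipschitz/stability of each sector's PHT ruling out a nearby point belonging to another sector) match the paper's proof step for step.
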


We use the standard notation $B(x,r)$ for an open ball centered at $x$ of radius $r$ in whichever metric the context indicates.

\begin{proof}
    The shape $M$ must be bounded because its convex hull is bounded.
    Fix $R\geq 1$ such that $M \subset B(0,R)$. 

    For each $j$, define the sets
    \begin{align*}
        E_j &\coloneqq \{(\V, z) \mid \V \in \bS^{d-1},\,  \, z \in \widetilde{\Dgm}_0(M^j, \V)  \setminus \Delta\}\,, 
        \\
        \mathcal{U}_j &\coloneqq \gamma \cap E_j\,.
    \end{align*}
    Because $M$ is sectorial, 
    $\gamma = \cup_j \mathcal{U}_j$. The sets $\{E_j\}$ are disjoint because $M$ has simple $\Dgm_0$, so the sets $\{\mathcal{U}_j\}$ are disjoint, too.
    We wish to show that $\mathcal{U}_i$ is open in $\gamma$ for all $i$. 
    This is trivially true for empty $\mathcal{U}_i$. 
    Fix $i$ such that $\mathcal{U}_i$ is non-empty.

    It sufficient to show that for all $(\V, z) \in \mathcal{U}_i$, there exists a $\delta>0$ such that 
    \[
    B((\V, z), \delta)\cap \gamma \subset \mathcal{U}_i \,,
    \]
    where 
    \begin{equation*}
        B((\V, z), \delta) := \{ (\mathbf{u}, z') \in \bS^{d-1} \times \R^2 \mid \norm{(\mathbf{u}, z') - (\V, z)}_2 < \delta\}
    \end{equation*}
    is an open ball of radius $\delta$ with respect to the Euclidean norm, with $\bS^{d-1} \subset \R^d$.
    Let $(\V, z)\in\mathcal{U}_i$ and set
    \begin{equation*}
        \epsilon \coloneqq \min_{j \neq i} \inf_{y\in \widetilde{\Dgm}_0(M^j, \V)} \norm{z- y}_2 \, .
    \end{equation*}
    Note that $\widetilde{\Dgm}_0(M^j, \V) \cap \widetilde{\Dgm}_0(M^i, \V) = \Delta$ for all $i \neq j$ because $M$ has simple $\Dgm_0$, so we must have $\epsilon > 0$.

    Suppose that $({\bf u}, z')\in B((\V, z), \frac{\epsilon}{2R\sqrt{2}})\cap \gamma$.
   
    For any $j \neq i$, let $w'$ be any point in $\widetilde{\Dgm}_0(M^j, u)$. We will show that $w' \neq z'$. The height functions $h_{\textbf{u}}$ and $h_\V$ corresponding to ${\bf u}$ and $\V$ are within $R\norm{{\bf u}-\V}_2$ sup-norm of each other. By bottleneck stability \cite{OPstability},
    \begin{equation*}
        d_B(\Dgm_0(M^j, \textbf{u}), \Dgm_0(M^j, \V)) \leq R \norm{{\bf u} - \V}_2 \leq \frac{\epsilon}{2 \sqrt{2}}\,.
    \end{equation*}
    Therefore, there is a $w \in \Dgm_0(M^j, \V)$ such that $\norm{w - w'}_{\infty} \leq \frac{\epsilon}{2 \sqrt{2}}$. This implies that
    \begin{equation*}
        \norm{z' - w'}_2 \geq \norm{z - w}_2 - \norm{z - z'}_2 - \norm{w - w'}_2 \geq \epsilon - \frac{\epsilon}{2R \sqrt{2}} - \frac{\epsilon}{2} > 0\,,
    \end{equation*}
    so $z' \not\in \widetilde{\Dgm}_0(M^j, u)$ for any $j \neq i$. Therefore, we must have $z' \in \widetilde{\Dgm}_0(M^i, {\bf u})$ and $B((\V, z), \frac{\epsilon}{2R \sqrt{2}}) \cap \gamma \subset \mathcal{U}_i$.

    We have thus shown that $\mathcal{U}_i$ is open in $\gamma$ for all $i$. 
    Because $\cup_i \mathcal{U}_i = \gamma$ is connected by assumption and $\mathcal{U}_i \cap \mathcal{U}_j = \emptyset$ for all $i \neq j$, there must be an $i$ such that $\mathcal{U}_j = \emptyset$ if $j \neq i$, which concludes the proof.
\end{proof}
\begin{lemma}\label{lem:finite_support}


Assume that $M$ is star-shaped, sectorial, and has simple $\Dgm_0$.
Let $\pi\colon\bS^{d-1} \times \overline{\R}^2 \to\bS^{d-1}$ be the projection map.
If $\gamma$ is a connected subset of the set
\begin{equation*}
    \{(\V, z) \mid \V \in\bS^{d-1}\, , \, z \in \widetilde{\Dgm}_0(M, \V) \setminus \Delta \}\,,
\end{equation*}
then $\pi(\gamma)$ is a strict subset of $\bS^{d-1}$.
\end{lemma}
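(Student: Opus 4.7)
The plan is to combine \Cref{lem:vine_within_sector} and \Cref{lem:finite_support_Mi} to exhibit an explicit direction in $\bS^{d-1}$ that $\pi(\gamma)$ fails to cover. First, since $\gamma$ is connected, it must be contained in a single connected component $\gamma'$ of the set
\[
E^\circ \coloneqq \{(\V, z) \mid \V \in \bS^{d-1},\, z \in \widetilde{\Dgm}_0(M, \V) \setminus \Delta\}\,,
\]
so it is enough to show $\pi(\gamma') \subsetneq \bS^{d-1}$.

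Next, I would invoke \Cref{lem:vine_within_sector} to obtain an index $i$ such that
\[
\gamma' \subset \{(\V, z) \mid \V \in \bS^{d-1},\, z \in \widetilde{\Dgm}_0(M^i, \V) \setminus \Delta\}\,.
\]
Consequently, $\pi(\gamma) \subset \pi(\gamma')$ is contained in the set of directions $\V$ for which $\widetilde{\Dgm}_0(M^i, \V) \setminus \Delta$ is non-empty, so it suffices to find a direction $\V^* \in \bS^{d-1}$ at which this sector has no off-diagonal reduced points.

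This is exactly what \Cref{lem:finite_support_Mi} provides: there is a direction $\V^* \in \bS^{d-1}$ with $\Dgm_0(M^i, \V^*) = \{(h_{\V^*}(c), \infty)\} \cup \Delta$. The only off-diagonal point is the essential class corresponding to the unique connected component of the sublevel sets of $h_{\V^*}\vert_{M^i}$ above $h_{\V^*}(c)$, which is precisely the class that is killed when passing from unreduced to reduced persistent homology (as noted in the remark following \Cref{thm:intervaldecomposition}). Therefore $\widetilde{\Dgm}_0(M^i, \V^*) \setminus \Delta = \emptyset$, which forces $\V^* \notin \pi(\gamma')$, and a fortiori $\V^* \notin \pi(\gamma)$.

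I do not anticipate any genuine obstacle here: the technical content has already been absorbed by \Cref{lem:vine_within_sector} (sectoriality, simplicity, and bottleneck stability) and \Cref{lem:finite_support_Mi} (the perpendicular-to-face construction using the center $c$). The present lemma is essentially a bookkeeping corollary that glues them together, and the only care needed is the translation from the unreduced diagram output of \Cref{lem:finite_support_Mi} to the reduced setting used throughout this section.
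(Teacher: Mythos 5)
Your proposal is correct and matches the paper's own proof: both combine \Cref{lem:vine_within_sector} (to confine $\gamma$ to a single sector's diagrams) with \Cref{lem:finite_support_Mi} (to exhibit a direction where that sector's reduced diagram is empty off the diagonal). Your extra step of first passing to the connected component containing $\gamma$ is a welcome refinement, since \Cref{lem:vine_within_sector} is stated for connected components rather than arbitrary connected subsets, but it does not change the argument.
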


\begin{proof}
    By \Cref{lem:vine_within_sector}, there is an $i$ such that
    \begin{equation*}
        \gamma \subset \{(\V, z) \mid \V \in\bS^{d-1}\,, z \in \widetilde{\Dgm}_0(M^i, \V) \setminus \Delta \}\,.
    \end{equation*}
    By \Cref{lem:finite_support_Mi}, there is a direction ${\bf w} \in\bS^{d-1}$ such that $\widetilde{\Dgm}_0(M^i, {\bf w}) = \Delta$.
    Therefore ${\bf w} \not \in \pi(\gamma)$.
\end{proof}

\begin{rem}
In particular, \Cref{thm:sector_decomposition} implies that the hypotheses of \Cref{lem:vine_within_sector} and \Cref{lem:finite_support} are satisfied when $M$ is a star-shaped object in $\R^2$ such that it has simple $\Dgm_0$ (recall that all star-shaped objects we consider here have as a convex hull a convex polytope).
\end{rem}
    
\begin{lemma}\label{we_have_vines}
     Suppose that $M\subset \R^d$ is a bounded object with simple $\Dgm_0$, and define
     \begin{equation*}
     E' \coloneqq \{(\V, z) \mid \V \in \bS^{d-1}, \, z \in \widetilde{\Dgm_0}(M, \V) \setminus \Delta\} \, .
     \end{equation*}
     Let $\pi\colon\bS^{d-1}\times \overline{\R}^2 \to \bS^{d-1}$ be the projection map. 
     Then $\pi$ restricted to $E'$ is a local homeomorphism. 
\end{lemma}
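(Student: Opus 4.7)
The plan is to build, around each $(\V_0, z_0) \in E'$, an explicit local continuous inverse of $\pi$ that tracks $z_0$ through the nearby diagrams. The two ingredients will be the simple-$\Dgm_0$ hypothesis (so that $z_0$ is isolated in $\widetilde{\Dgm}_0(M,\V_0)$) and the Lipschitz continuity of $\PHT_0$ in the bottleneck distance from~\cref{rem: LipschitzPHT}, whose Lipschitz constant is $K = \sup_{x \in M}\|x\|$.

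First I would fix $(\V_0, z_0)\in E'$. Since $\widetilde{\Dgm}_0(M,\V_0)$ is finite with simple off-diagonal points, I would choose $r_0>0$ so that the open $\ell_\infty$-ball $B_{\ell_\infty}(z_0,2r_0)$ meets $\widetilde{\Dgm}_0(M,\V_0)\setminus\Delta$ only at $z_0$ and so that $\|z_0-\Delta\|_\infty \geq 2r_0$; this last inequality is automatic (indeed infinite) when $z_0$ is essential, so the argument will proceed uniformly for essential and non-essential classes. I would then pick $\delta>0$ with $K\delta<r_0/2$ and set $U\coloneqq E'\cap\bigl(B(\V_0,\delta)\times B_{\ell_\infty}(z_0,r_0)\bigr)$, which is open in $E'$. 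For each $\V\in\bS^{d-1}$ with $\|\V-\V_0\|_2<\delta$, stability gives a bottleneck-optimal matching of cost $<r_0/2$, which must pair $z_0$ with an off-diagonal $z(\V)\in\widetilde{\Dgm}_0(M,\V)$ at $\ell_\infty$-distance $<r_0/2$ (it cannot be matched to $\Delta$ because $\|z_0-\Delta\|_\infty\geq 2r_0>r_0/2$). I claim $z(\V)$ is the \emph{only} off-diagonal point of $\widetilde{\Dgm}_0(M,\V)$ in $B_{\ell_\infty}(z_0,r_0)$: any second candidate $z'$ would be matched either to another off-diagonal point of $\widetilde{\Dgm}_0(M,\V_0)$ lying in $B_{\ell_\infty}(z_0,2r_0)$ (contradicting isolation) or to $\Delta$ at distance $<r_0/2$ (contradicting $\|z'-\Delta\|_\infty\geq\|z_0-\Delta\|_\infty-r_0\geq r_0$).

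This identifies $\pi|_U$ with a continuous bijection from $U$ onto the open set $B(\V_0,\delta)\cap\bS^{d-1}$, with set-theoretic inverse $s\colon\V\mapsto(\V,z(\V))$. To prove continuity of $s$, I would fix $\V^*\in B(\V_0,\delta)\cap\bS^{d-1}$ and repeat the isolation step at $(\V^*,z(\V^*))$: choose $r^*\in(0,r_0/2)$ isolating $z(\V^*)$ in the simple diagram $\widetilde{\Dgm}_0(M,\V^*)$ and $\delta^*>0$ with $K\delta^*<r^*/2$. For $\|\V-\V^*\|_2<\delta^*$, the unique off-diagonal point of $\widetilde{\Dgm}_0(M,\V)$ in $B_{\ell_\infty}(z(\V^*),r^*)$ also lies in $B_{\ell_\infty}(z_0,r_0)$ by the triangle inequality, so it must coincide with $z(\V)$ by the uniqueness already established; hence $\|z(\V)-z(\V^*)\|_\infty<r^*/2$, giving continuity at $\V^*$ and completing the argument that $\pi|_U$ is a homeomorphism onto its open image.

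The main sticking point will be preventing an off-diagonal point of a nearby diagram from drifting into $B_{\ell_\infty}(z_0,r_0)$ while being matched to $\Delta$ rather than to $z_0$. The quantitative separation $\|z_0-\Delta\|_\infty\geq 2r_0$ secured by the isolation step is exactly what rules this out, and the extended $\ell_\infty$-metric from Section~2 makes the separation automatic (indeed infinite) when $z_0$ is essential, so the same estimate works seamlessly in both the essential and non-essential cases.
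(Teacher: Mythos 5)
Your proof is correct and follows essentially the same approach as the paper: isolate the point $z_0$ in its (finite, simple) diagram, use the Lipschitz/bottleneck-stability bound to show each nearby fibre meets a small product neighbourhood in exactly one off-diagonal point, and conclude that $\pi$ restricts to a homeomorphism there. Your treatment is in fact slightly more careful than the paper's in two places---the explicit $\epsilon$--$\delta$ argument for continuity of the local inverse $\V\mapsto z(\V)$ (which the paper compresses to ``is a homeomorphism by bottleneck stability'') and the uniform handling of essential classes via the extended $\ell_\infty$-metric---but the underlying argument is the same.
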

    
\begin{proof}
It is sufficient to show that for all $(\V, z)\in E'$, there exist $\epsilon_1, \epsilon_2 >0$ such that $\pi$ restricted to $E'\cap \Big(B^{\bS^{d-1}}(\V, \epsilon_1) \times B^{\R^2}(z, \epsilon_2)\Big)$ is a homeomorphism onto $B^{\bS^{d-1}}(\V, \epsilon_1)$, where
\begin{align*}
    B^{\bS^{d-1}}(\V, \epsilon_1) &\coloneqq \{u \in \bS^{d-1} \mid \norm{u - v}_2 < \epsilon_1\}\,, \\
    B^{\R^2}(z, \epsilon_2) &\coloneqq \{z' \in \R^2 \mid \norm{z' - z}_2 < \epsilon_2\}
\end{align*}
are open balls of radius $\epsilon_1$, $\epsilon_2$, respectively, with respect to the Euclidean norm.
   
$M$ is bounded by assumption, so fix $R\geq 1$ such that $M \subset B^{\R^d}(0,R)$. 
Define 
\begin{equation*}
\delta = \min_{\substack{q \in \widetilde{\Dgm}_0(M, \V) \\ q \neq z}} \norm{z - q}_2\, .
\end{equation*}
Note that $\delta > 0$ because $M$ has simple $\Dgm_0$ by assumption.
Set $\epsilon_1 =\frac{\delta}{2\sqrt{2}R}$ and $\epsilon_2 = \frac{\delta}{2}$. If ${\bf u} \in B^{\bS^{d-1}}(\V, \epsilon_1)$, then $\norm{h_{\bf u} - h_\V}_{\infty} \leq R \norm{u - v}_2 \leq \frac{\epsilon_1}{2 \sqrt{2}}$, so bottleneck stability implies
    \begin{equation}\label{eq:db_bound}
    d_B(\Dgm_0(M, {\bf u}), \Dgm_0(M, \V)) < \frac{\delta}{2 \sqrt{2}}\, .
    \end{equation}
Therefore, there is a matching $\mathcal{M} \subseteq \Dgm_0(M, {\bf u}) \times \Dgm_0(M, \V)$ such that every non-diagonal element of $\Dgm_0(M, {\bf u})$, $\Dgm_0(M, \V)$ appears in exactly one pair, and $\norm{x - y}_2  < \frac{\delta}{2}$ for all $(x, y) \in \mathcal{M}$. Let $q_u \in \Dgm_0(M, {\bf u})$ be the unique point such that $(z, q_u) \in \mathcal{M}$. We must have $q_u \neq \Delta$ because $\norm{q_u - z}_2 < \frac{\delta}{2} < \delta \leq \norm{z - \Delta}_2$; therefore, $({\bf u}, q_{\bf u}) \in E' \cap \Big(B^{\bS^{d-1}}(\V, \epsilon_1) \times B^{\R^2}(z, \epsilon_2)\Big)$, where $\epsilon_2 = \frac{\delta}{2}$.

If $q' \in \Dgm_0(M, {\bf u})$ and $q' \neq q$, then let $z' \in \Dgm_0(M, \V)$ a point such that $(z', q') \in \mathcal{M}$. We must have $z' \neq z$ because $z$ only appears in one pair, so $\norm{z - z'}_2 \geq \delta$ by definition of $\delta$. Therefore,
\begin{equation*}
    \norm{q' - z}_2 \geq \norm{z - z'}_2 - \norm{z' - q'}_2 > \norm{z - z'}_2 - \frac{\delta}{2} \geq \frac{\delta}{2}\,,
\end{equation*}
so $q' \not\in B^{\R^2}(z, \frac{\delta}{2}$. Therefore,
\begin{equation*}
    E' \cap \Big( B^{\bS^{d-1}}(\V, \epsilon_1) \times B^{\R^2}(z, \epsilon_2)\Big) = \{({\bf u}, q_{\bf u})\}_{{\bf u} \in B^{\bS^{d-1}}(\V, \epsilon_1)}\,.
\end{equation*}
The projection $\pi: ({\bf u}, q_{\bf u}) \mapsto {\bf u}$ is a homeomorphism by bottleneck stability.
\end{proof}

We are now ready to show a sufficient condition for trivial geometric monodromy. 
In what follows, the symbol $\partial M$ denotes the boundary of a subset $M\subset \R^d$. 

\begin{proposition}\label{prop:no_monodromy_suff_conditions}
    Assume that $M\subset \R^2$ is star-shaped and has a simple $\Dgm_0$. 
    Then $\PHT_0(M)$ has trivial geometric monodromy.
\end{proposition}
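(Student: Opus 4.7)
The plan is to exhibit an explicit family of sections whose non-diagonal values exhaust $\Dgm_0(M,\V)$ for every $\V \in \bS^1$, thereby verifying \cref{def:no_geometric_monodromy}.

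First I would handle the essential class separately. Since $M$ is star-shaped, it is connected, and as its convex hull is compact, so is $M$. Therefore $\Dgm_0(M,\V)$ carries a unique essential class $(b(\V), \infty)$ with $b(\V) = \min_{x \in M} h_\V(x)$. As a minimum of continuous linear functions over a compact set, $b \colon \bS^1 \to \R$ is continuous, so $\tau_0(\V) \coloneqq (b(\V), \infty)$ defines a section supported on all of $\bS^1$ that accounts for the essential class.

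For the non-essential classes I pass to the reduced total space
\[
E' \coloneqq \{(\V, z) \mid \V \in \bS^1,\, z \in \widetilde{\Dgm}_0(M,\V) \setminus \Delta\} \subset \bS^1 \times \R^2.
\]
By \cref{we_have_vines}, the projection $\pi \colon E' \to \bS^1$ is a local homeomorphism; together with being Hausdorff and second-countable as a subspace of $\bS^1 \times \R^2$, this makes $E'$ a topological $1$-manifold. Each connected component $\gamma$ of $E'$ is therefore homeomorphic to $\R$ or $\bS^1$. The case $\gamma \cong \bS^1$ would make $\pi|_\gamma$ a local homeomorphism from a compact space to $\bS^1$, with image both open and compact, hence clopen in connected $\bS^1$; this would force $\pi(\gamma) = \bS^1$ and contradict \cref{lem:finite_support}. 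Hence $\gamma \cong \R$. I then use \cref{lem:finite_support} to pick $\V_0 \notin \pi(\gamma)$ and identify $\bS^1 \setminus \{\V_0\}$ with $\R$ via a homeomorphism, so that $\pi|_\gamma$ becomes a local homeomorphism $\R \to \R$. Such a map is locally strictly monotonic, and by connectedness of its domain globally strictly monotonic, hence injective; being also open it is a homeomorphism onto the open arc $\pi(\gamma) \subset \bS^1$.

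It remains to define and check the section associated with $\gamma$. Set $\tau_\gamma(\V) \coloneqq (\pi|_\gamma)^{-1}(\V)$ for $\V \in \pi(\gamma)$ and $\tau_\gamma(\V) \coloneqq \Delta$ otherwise. Continuity on $\pi(\gamma)$ is immediate from the homeomorphism property. For $\V^\ast \in \bS^1 \setminus \pi(\gamma)$, I argue by contradiction: if some $\V_n \to \V^\ast$ with $\V_n \in \pi(\gamma)$ had $\tau_\gamma(\V_n)$ not approaching $\Delta$, a subsequential limit $z \neq \Delta$ would give $(\V^\ast, z) \in \overline{\gamma}$, and bottleneck stability combined with the simple $\Dgm_0$ hypothesis would place $z$ in $\widetilde{\Dgm}_0(M,\V^\ast) \setminus \Delta$; closedness of $\gamma$ in $E'$ would then force $(\V^\ast, z) \in \gamma$, contradicting $\V^\ast \notin \pi(\gamma)$. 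So $\tau_\gamma$ is continuous with connected support $\pi(\gamma)$, hence a section. Since every non-essential point of $\Dgm_0(M,\V)$ lies in precisely one connected component of $E'$, the family $\{\tau_0\} \cup \{\tau_\gamma\}_{\gamma}$ exhausts $\Dgm_0(M,\V)$ for every $\V \in \bS^1$, establishing trivial geometric monodromy. The main obstacle is the injectivity of $\pi|_\gamma$: a local homeomorphism from a connected space need not be injective, and this is precisely the failure producing non-trivial monodromy in examples such as \cref{fig:spiral}. The decisive leverage is \cref{lem:finite_support}, which punctures $\bS^1$ into a simply connected interval over which a connected local homeomorphism from $\R$ is forced to be monotonic.
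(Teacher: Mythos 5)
Your proof is correct and follows essentially the same route as the paper's: both rely on \cref{we_have_vines} (the projection is a local homeomorphism) together with \cref{lem:finite_support} (each component projects to a proper subset of $\bS^1$) to show each connected component of the off-diagonal total space is a graph over an open arc, then extend by $\Delta$ and rule out boundary discontinuities via stability. Your write-up is in fact somewhat more careful than the paper's at the step where injectivity of $\pi$ on each component is deduced (via the classification of connected $1$-manifolds and monotonicity) and in explicitly supplying the essential-class section.
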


\begin{proof}
Let $\hat{\gamma_1}, \ldots, \hat{\gamma_n} \subset \bS^1\times\overline{\R}^2 $ be the connected components of 
\begin{equation*}
    E \coloneqq \{(p, \V) \mid \V \in \bS^1, p \in \widetilde{\Dgm}_0(M, \V)\setminus \Delta\}
\end{equation*} 
when written as a persistence diagram bundle.
We know each $\hat{\gamma_i}$ is a strict subset of $\bS^1$ by \Cref{lem:finite_support_Mi}.
By \cref{we_have_vines}, $\pi$ restricted to $\hat{\gamma_i}$ is a local homeomorphism. 
Together they imply that $|\hat{\gamma_i} \cap \pi^{-1}(\V)|$ is at most $1$ for any $\V$.

Define a parameterized curve $\gamma_i \colon \bS^1 \to \R^2$ that extends $\hat{\gamma_i}$ by setting $\gamma({\bf v})=\Delta$ for ${\bf v} \notin \pi(\hat{\gamma})$. 
By construction $E= \bigcup_i \hat{\gamma_i}$ so  $ \widetilde{\Dgm}_0(M, \V)= \bigcup_{i \mid \gamma_i(\V) \neq \Delta} \gamma_i(\V)$ for all $\V \in \bS^1$.

To complete the proof we just need to verify that the $\gamma_i$ are each continuous.
%
As $\pi$ restricted to $\hat{\gamma}_i$ is a local homeomorphism, we know that $\pi(\hat{\gamma_i})$ is an open subset of $\bS^1$ and the only locations of potential discontinuity are the vectors $\V\in \partial(\pi(\hat{\gamma_i}))$. 
By way of contradiction suppose that $\gamma_i$ is discontinuous at $\V \in \partial(\pi(\hat{\gamma}_i))$. 
Without loss of generality (by rotating $M$ if necessary) assume that $\V=(1,0)$ and that $(\cos(\theta), \sin(\theta))\in \pi(\hat{\gamma}_i)$ for all small positive $\theta$. 
Let $z= \lim_{\theta \downarrow 0}\gamma_i(\cos(\theta), \sin(\theta))$. 
If $z$ is a location on the diagonal in $\R^2$ then it will not cause a discontinuity so we know that $z$ must be a positive distance to the diagonal. 
As the persistence diagrams are continuous with respect to changes in direction we must have
$(\V,z)\in E$, but this would imply it $(\V, z)\in \hat{\gamma_i}$ and $\V\in \pi(\hat{\gamma_i})$ which creates a contradiction.
\end{proof}

The proof of \cref{prop:no_monodromy_suff_conditions} cannot be generalized to sectorial objects in $\R^d$ without additional hypotheses. 
Any non-trivial monodromy over persistence bundles over $\bS^1$ can only occur through the continuation of a section the entire way around $\bS^1$. 
In higher ambient dimensions we must also consider potential non-trivial monodromy along smaller contractible loops in $\bS^{d-1}$.
An illustrative example is as follows.

\begin{example}\label{ex:higher_spiral}
Let $X$ be the spiral in \cref{fig:spiral} but embedded in $\R^3$ in the plane with $z$ coordinate $-1$. 
We then set $M=\{tx \mid x\in X, t\in [0,1]\}$. 
Note this is star-shaped by construction with a center in the origin.
We can consider any small loop around the north pole (but not going through the north pole) and we will get the same monodromy behavior as in \cref{ex_monodromy}. 
\end{example}
 
The above results show that if $M$ is a star-shaped object in $\R^2$ that has simple $\Dgm_0$, then $\PHT_0(M)$ has trivial geometric monodromy. 
Next, we provide sufficient conditions so that $M$ has simple $\Dgm_0$.
The following lemmas are used towards \Cref{thm:no_monodromy}, which gives us sufficient conditions on $M$ to have simple $\Dgm_0$ and, consequently by \Cref{prop:no_monodromy_suff_conditions}, trivial geometric monodromy in $\PHT_0(M)$. 
\medskip

Let $(\partial M)_{\V, t} \coloneqq \{ x \in \partial M \mid h_\V (x) \leq t\}$. 
Equivalently, $(\partial M)_{\V, t}$ is the $t$-sublevel set of $h_\V \vert_{\partial M}$. 
We remark that $(\partial M)_{\V, t} $ does not denote the boundary of $M_{\V, t}$---the former is a subset of the latter.

\begin{lemma}\label{lem:def_retract}
    Let $\V \in \bS^{d-1}$. 
    If $t < h_\V(c)$, then $M_{\V, t}$ deformation retracts to $(\partial M)_{\V, t}$.
\end{lemma}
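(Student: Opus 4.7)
My plan is to define a deformation retraction by pushing each point of $M_{\V,t}$ radially outward from the center $c$ along the ray through $c$ and $x$, until it meets $\partial M$. Since $t<h_{\V}(c)$, the center $c$ does not lie in $M_{\V,t}$, so every $x\in M_{\V,t}$ satisfies $x\neq c$, and the ray $r_x(s)\coloneqq c+s(x-c)$ is well-defined, with $r_x(0)=c$ and $r_x(1)=x$. Star-shapedness with center $c$ forces $\{s\ge 0 : r_x(s)\in M\}$ to be an interval $[0,s^*(x)]$: if $r_x(s_0)\in M$, then by star-shapedness the segment from $c$ to $r_x(s_0)$ lies in $M$ and therefore contains $r_x(s)$ for every $s\in[0,s_0]$. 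Since $M$ is bounded and closed, $s^*(x)\ge 1$ is finite, and $r_x(s^*(x))\in\partial M$ because $r_x(s)\notin M$ for $s>s^*(x)$.

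Next, because $h_{\V}$ is linear, I would compute $h_{\V}(r_x(s))=h_{\V}(c)+s\bigl(h_{\V}(x)-h_{\V}(c)\bigr)$. The assumption $h_{\V}(x)\le t<h_{\V}(c)$ makes $h_{\V}(x)-h_{\V}(c)<0$, so $h_{\V}\circ r_x$ is strictly decreasing. Hence for every $s\in[1,s^*(x)]$ we have $h_{\V}(r_x(s))\le h_{\V}(x)\le t$, which means $r_x(s)\in M_{\V,t}$; in particular the exit point $r_x(s^*(x))$ lies in $(\partial M)_{\V,t}$. I would then define the homotopy
\[
F\colon M_{\V,t}\times[0,1]\to M_{\V,t},\qquad F(x,\tau)\coloneqq r_x\bigl(1+\tau(s^*(x)-1)\bigr),
\]
which satisfies $F(x,0)=x$ and $F(x,1)\in(\partial M)_{\V,t}$, realising the retraction of $M_{\V,t}$ onto $(\partial M)_{\V,t}$.

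The main technical obstacle is continuity of $F$, which reduces to continuity of the radial exit function $x\mapsto s^*(x)$. In general $s^*$ is only upper semi-continuous, so I would establish continuity from the boundary structure of $M$: each ray from $c$ through $x$ first exits $M$ across some face (or piece) of $\partial M$, and under the paper's standing hypothesis that the convex hull of $M$ is a convex polytope one can track how the exit face varies with $x$ and check consistency across ray transitions. Once $s^*$ is continuous, so is $F$, completing the construction of the deformation retraction.
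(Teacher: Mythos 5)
Your construction is the same one the paper uses: retract radially outward from the center $c$ via the straight-line homotopy, using the linearity of $h_\V$ to see that points move only to lower height along the outward ray and hence never leave $M_{\V,t}$, landing in $(\partial M)_{\V,t}$. The explicit computation $h_\V(r_x(s)) = h_\V(c) + s\bigl(h_\V(x)-h_\V(c)\bigr)$ is a welcome detail that the paper leaves implicit.

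There is, however, one genuine defect in the map as you have formally written it. You send $x$ to $r_x(s^*(x))$, the \emph{last} point of $M$ on the ray, whereas a deformation retraction onto $(\partial M)_{\V,t}$ must restrict to the identity on that subspace. If $x$ already lies in $(\partial M)_{\V,t}$ but the ray from $c$ through $x$ continues through $M$ beyond $x$ --- for instance when $x$ sits on a boundary edge collinear with $c$, or when a sector of $M$ degenerates to a line segment as in the paper's example of the shape $N$ built from two triangles --- then $s^*(x)>1$ and $F(x,1)\neq x$. As written, $F$ only exhibits a homotopy from the identity of $M_{\V,t}$ to a map into $(\partial M)_{\V,t}$, which is strictly weaker than the claimed deformation retraction. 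The paper avoids this by decreeing $r^t(x)=x$ for $x\in(\partial M)_{\V,t}$ and otherwise stopping at the \emph{first} point of $(\partial M)_{\V,t}$ encountered going outward; your informal phrase ``until it meets $\partial M$'' is the right idea, but your formula implements something different. Finally, you are right that continuity of the retraction is the real technical content of this lemma: the paper simply asserts it, so your flagging of the semi-continuity of the exit function is, if anything, more careful than the published argument --- but your proof is not complete until that verification is actually carried out.
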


\begin{proof} 
The lemma is trivially true for $(\partial M)_{\V, t}$ empty, as then $M_{\V, t}$ is also empty.

When $(\partial M)_{\V, t} \neq \emptyset$, we define a retraction $r^t \colon M_{\V, t} \to (\partial M)_{\V, t}$ as follows. 
Take $x \in M_{\V, t}$. 
If, in particular, $x\in (\partial M)_{\V, t}$, then $r^t(x)\coloneqq x$.
Assume then $x\not\in (\partial M)_{\V, t}$, and let $L_x$ be the line that contains points $x$ and $c$. 
Set $r^t(x)$ to be the closest point on $L_x\cap (\partial M)_{\V, t}$ to $x$. 
Note that this set $L_x\cap (\partial M)_{\V, t}$ will usually be a singleton. 
The straight line homotopy between the retraction and the identity map on $M_{\V, t}$ is a deformation retraction. 

Equivalently, we can describe this deformation retraction as following the flow of the vector field pointing away from $c$, halting whenever we hit $(\partial M)_{\V, t}$.
\end{proof}

The next lemma shows that in order to study the PHT of $M$, it suffices to study the persistent homology of $\{(\partial M)_{\V t}\}_{t \in \R}$ for each $\V \in \bS^1$.

\begin{lemma}\label{lem:reduce_to_boundary}
    Let $\V \in \bS^1$ and suppose that $s \leq t < h_\V(c)$. 
    Let $\psi_{st}\colon H_0((\partial M)_{\V, s}) \to H_0((\partial M)_{\V, t})$ and $\phi_{st}\colon H_0(M_{\V, s}) \to H_0(M_{\V, t})$ be the maps induced by the inclusions $(\partial M)_{\V, s} \hookrightarrow (\partial M)_{\V, t}$ and $M_{\V, s} \hookrightarrow M_{\V, t}$, respectively. 
    Then
    \begin{align*}
        \dim(\ker(\psi_{st})) &= \dim(\ker(\phi_{st}))\,, \\
        \dim\Big(H_0((\partial M)_{\V, t}) / \image(\psi_{st})\Big) &= \dim\Big(H_0(M_{\V, t})/\image(\phi_{st})\Big)\,,
    \end{align*}
    and $\phi_{st}$ is an isomorphism if and only if $\psi_{st}$ is an isomorphism.
\end{lemma}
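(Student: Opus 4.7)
The plan is to use \Cref{lem:def_retract} to convert the statement about $\phi_{st}$ into one about $\psi_{st}$ via an isomorphism of short diagrams. Concretely, fix $\V \in \bS^1$ and $s \leq t < h_\V(c)$. By \Cref{lem:def_retract}, the inclusions
$$j_s \colon (\partial M)_{\V, s} \hookrightarrow M_{\V, s}, \qquad j_t \colon (\partial M)_{\V, t} \hookrightarrow M_{\V, t}$$
are homotopy equivalences, so the induced maps $(j_s)_*$ and $(j_t)_*$ on $H_0$ are isomorphisms.

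Next I would observe that the square of inclusions
\begin{equation*}
\begin{tikzcd}
(\partial M)_{\V, s} \arrow[r, hook] \arrow[d, hook, "j_s"'] & (\partial M)_{\V, t} \arrow[d, hook, "j_t"] \\
M_{\V, s} \arrow[r, hook] & M_{\V, t}
\end{tikzcd}
\end{equation*}
commutes on the nose, since all four maps are literal subset inclusions and both compositions equal $(\partial M)_{\V, s} \hookrightarrow M_{\V, t}$. Applying the functor $H_0(-)$ yields the commutative diagram
\begin{equation*}
\begin{tikzcd}
H_0((\partial M)_{\V, s}) \arrow[r, "\psi_{st}"] \arrow[d, "(j_s)_*"', "\cong"] & H_0((\partial M)_{\V, t}) \arrow[d, "(j_t)_*", "\cong"'] \\
H_0(M_{\V, s}) \arrow[r, "\phi_{st}"'] & H_0(M_{\V, t})
\end{tikzcd}
\end{equation*}
in which, by the previous paragraph, both vertical arrows are isomorphisms. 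Hence $\phi_{st} = (j_t)_* \circ \psi_{st} \circ (j_s)_*^{-1}$.

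From this identification the three conclusions are immediate: the isomorphism $(j_s)_*$ restricts to an isomorphism $\ker(\psi_{st}) \xrightarrow{\cong} \ker(\phi_{st})$, giving the first equality of dimensions; the isomorphism $(j_t)_*$ descends to an isomorphism $H_0((\partial M)_{\V, t})/\image(\psi_{st}) \xrightarrow{\cong} H_0(M_{\V, t})/\image(\phi_{st})$, giving the second; and since $\phi_{st}$ is obtained from $\psi_{st}$ by pre- and post-composition with isomorphisms, one is an isomorphism if and only if the other is. I don't anticipate a real obstacle here: the only subtle point is confirming that \Cref{lem:def_retract} indeed gives a \emph{deformation} retraction (so that $j_s$ and $j_t$ are genuine homotopy equivalences, not merely retractions), but this is already verified in the cited lemma via the straight-line homotopy along the radial vector field from $c$.
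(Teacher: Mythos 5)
Your proof is correct and follows essentially the same route as the paper: both arguments invoke \Cref{lem:def_retract} to get a commuting square relating $\psi_{st}$ and $\phi_{st}$ via vertical isomorphisms on $H_0$, and then read off the three conclusions. The only (cosmetic) difference is that you use the inclusion-induced isomorphisms $(j_s)_*, (j_t)_*$, so your square commutes tautologically, whereas the paper uses the retractions $r^s_*, r^t_*$ and must note that $r^t\vert_{M_{\V,s}} = r^s$.
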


\begin{proof}
    For each $t < h_{\V}(c)$, let $r^t \colon M_t \to (\partial M)_t$ be the retraction defined in \Cref{lem:def_retract}. 
    The following diagram commutes:
    \begin{equation*}
\begin{tikzcd}
(\partial M)_{\V,s} \arrow[r, hook]       & (\partial M)_{\V,t}  \\
M_{\V,s} \arrow[r, hook] \arrow[u, "r^s"] & M_{\V,t} \arrow[u, "r^t"']
\end{tikzcd}
\end{equation*}
because $r^t \vert_{M_{\V, s}} = r^s$. 
Therefore, the following diagram also commutes:
\begin{equation*}
    \begin{tikzcd}
    H_0((\partial M)_{\V,s}) \arrow[r, "\psi_{st}"] & H_0((\partial M)_{\V,t}) \\
H_0(M_{\V,s}) \arrow[u, "r^s_*"] \arrow[r, "\phi_{st}"']  & H_0(M_{\V,t}) \arrow[u, "r^t_*"']\,. 
\end{tikzcd}
    \end{equation*}
By \Cref{lem:def_retract}, the maps $r_*^t$, $r_*^s$ are isomorphisms, so the claims follow.
\end{proof}

In the following lemmas, we additionally assume that $M$ is a polygon in $\R^2$. 
We denote by $\{w_j\}_{j=0}^{k}$ the set of vertices on $M$. 
Note that, in general, this set properly contains  $\{x_i\}_{i=0}^{m}$, the set of vertices of $\ch$.

\begin{lemma}\label{lem:CC_contains_vertex}
    Suppose that $M$ is a star-shaped planar polygon. Let $\V \in \bS^1$. 
    For all $t \in \R$ and $x \in (\partial M)_{\V t}$, the component $[x]_t$ contains at least one vertex of $M$, where $[x]_t$ denotes the connected component of $(\partial M)_{\V t}$ that contains $x$.
\end{lemma}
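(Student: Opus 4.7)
The plan is to exploit that on each edge of the polygonal curve $\partial M$ the height function $h_\V$ restricts to an affine function, so a connected component of $(\partial M)_{\V,t}$ cannot ``terminate'' in the interior of an edge without forcing a contradiction.

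First I would observe that $\partial M$ is a $1$-dimensional polygonal complex whose $0$-cells are the vertices $\{w_j\}_{j=0}^k$ of $M$ and whose $1$-cells are straight segments on each of which $h_\V$ is the restriction of an affine function, hence itself affine. Since $(\partial M)_{\V,t} = h_\V^{-1}((-\infty,t]) \cap \partial M$ is closed in $\partial M$ and $\partial M$ is locally connected, each connected component $[x]_t$ is a closed connected subset of $\partial M$, so it is either a single point, a closed arc, or a full connected component of $\partial M$. In the last case $[x]_t$ trivially contains every vertex of $M$ lying on that component, and we are done.

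Next I would rule out, under the contrary assumption that $[x]_t$ contains no vertex of $M$, the remaining two cases. If $[x]_t=\{x\}$ and $x$ is not a vertex, then $x$ lies in the interior of an edge on which $h_\V$ is affine; isolation of $x$ in $(\partial M)_{\V,t}$ would require $h_\V>t$ in a punctured neighborhood of $x$ along the edge while $h_\V(x)\leq t$, which no affine function can satisfy. If $[x]_t$ is a closed arc containing no vertex, both of its endpoints lie in interiors of edges; if $[x]_t$ spanned two or more edges it would have to pass through a vertex where they meet, a contradiction, so $[x]_t$ is contained in the interior of a single edge $e$. Then each endpoint of $[x]_t$ must satisfy $h_\V=t$ (otherwise the sublevel set would extend further along $e$), so the affine function $h_\V|_e$ attains the value $t$ at two distinct interior points of $e$, forcing $h_\V|_e\equiv t$. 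But then the entire edge $e$---including its two vertex endpoints---lies in $(\partial M)_{\V,t}$ and, by connectedness, in $[x]_t$, contradicting the absence of vertices.

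The main subtlety I anticipate is the degenerate case of an edge perpendicular to $\V$, on which $h_\V$ is constant; this is handled automatically by the ``affine-equal-$t$-at-two-points'' step, which concludes that constancy of $h_\V$ on such an edge at value $\leq t$ pulls the endpoints of that edge into $[x]_t$ as vertices.
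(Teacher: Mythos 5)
Your proof is correct, but it takes a longer, by-contradiction route where the paper argues directly in three lines. The paper's proof simply takes the edge $(w_i,w_j)$ of $\partial M$ containing $x$, assumes without loss of generality that $h_\V(w_i)\le h_\V(w_j)$, and notes that affineness of $h_\V$ on the edge gives $h_\V(w_i)\le h_\V(x)\le t$; since every point of the subsegment from $w_i$ to $x$ then also has height at most $t$, that subsegment lies in $(\partial M)_{\V, t}$ and connects $x$ to the vertex $w_i$, so $w_i\in[x]_t$. You instead classify the possible topological types of a component of a closed subset of the circle $\partial M$ (point, arc, all of $\partial M$) and rule out a vertex-free component case by case. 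Both arguments hinge on the same fact---$h_\V$ is affine on each edge---but the direct version buys brevity and sidesteps the bookkeeping you need (endpoints of the arc lying in edge interiors, the level-$t$ condition at those endpoints, and the separate treatment of edges perpendicular to $\V$): choosing the lower-height endpoint of the edge handles the constant-height case automatically. Your version does have the minor virtue of making explicit why no component can ``hide'' strictly inside an edge, which the paper leaves implicit, but for the record your case analysis also quietly uses that $\partial M$ is a simple closed curve (so that its closed connected subsets are points, arcs, or the whole curve), a hypothesis the direct argument never needs.
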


\begin{proof}
    Let $(w_i, w_j)$ be the line segment of $\partial M$ that $x$ lies on. 
    Without loss of generality, assume $h_{\V}(w_i) \leq h_{\V}(w_j)$. Therefore, $h_{\V}(w_i) \leq h_{\V}(x) \leq h_{\V}(w_j)$, so the component $[x]_t$ contains the vertex $w_i$.
\end{proof}

The next lemma says that new homology classes do not appear or disappear unless a new vertex $w$ of $M$ has been added. 

\begin{lemma}\label{lem:cp_is_vertex}
    Suppose that $M$ is a star-shaped planar polygon. Let $\V \in \bS^1$.
    If $t < h_{\V}(c)$ and $h_{\V}(w) \not \in [s, t]$ for all vertices $w$ of $M$, then
    \begin{equation*}
        \phi_{st}\colon H_0(M_{\V, s}) \to H_0(M_{\V, t})
    \end{equation*}
    is an isomorphism, where $\phi_{st}$ is the map induced by the inclusion $M_s \hookrightarrow M_t$.
\end{lemma}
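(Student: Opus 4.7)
The plan is to reduce to the boundary of $M$ via \Cref{lem:reduce_to_boundary} and then exhibit an explicit deformation retraction from $(\partial M)_{\V, t}$ to $(\partial M)_{\V, s}$. Since $t < h_\V(c)$, \Cref{lem:reduce_to_boundary} applies, so it suffices to show that the induced map $\psi_{st}\colon H_0((\partial M)_{\V, s}) \to H_0((\partial M)_{\V, t})$ is an isomorphism; in fact, I will prove the stronger statement that $(\partial M)_{\V, s} \hookrightarrow (\partial M)_{\V, t}$ is a homotopy equivalence.

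The key geometric input is the hypothesis that no vertex $w$ of $M$ has $h_\V(w) \in [s, t]$. This partitions the vertices of $M$ into the ``low'' set $V_{\mathrm{low}} \coloneqq \{w \mid h_\V(w) < s\}$ and the ``high'' set $V_{\mathrm{high}} \coloneqq \{w \mid h_\V(w) > t\}$. Next I would do a case analysis on each edge $(w_i, w_j)$ of the polygon $M$, using the fact that $h_\V$ is a linear (affine) function when restricted to a line segment: if both endpoints lie in $V_{\mathrm{low}}$, then the entire edge has height strictly less than $s$ and thus lies in $(\partial M)_{\V, s} \subset (\partial M)_{\V, t}$; if both endpoints lie in $V_{\mathrm{high}}$, then the entire edge has height strictly greater than $t$ and hence contributes nothing to either sublevel set; and if one endpoint $w_i$ lies in $V_{\mathrm{low}}$ while the other $w_j$ lies in $V_{\mathrm{high}}$, then the portion of the edge in $(\partial M)_{\V, r}$ for $r \in \{s, t\}$ is a single subsegment anchored at $w_i$ and terminating at the unique point of the edge with height $r$.

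In the third (mixed) case, the subsegment at level $s$ is strictly contained in the subsegment at level $t$, and a linear retraction along the edge toward $w_i$ sends the level-$t$ subsegment onto the level-$s$ subsegment while fixing $w_i$. These per-edge retractions agree on shared vertices (each vertex of $M$ is either fixed in both retractions or not present in either subsegment), so they glue to a global strong deformation retraction $r\colon (\partial M)_{\V, t} \to (\partial M)_{\V, s}$ realised by a linear homotopy. Hence $\psi_{st}$ is an isomorphism, and by \Cref{lem:reduce_to_boundary} so is $\phi_{st}$.

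The main subtlety to be careful about is ensuring the case analysis is exhaustive: one must use that $[s,t]$ is closed to rule out vertices with $h_\V(w) = s$ or $h_\V(w) = t$, and one must handle edges parallel to the level sets of $h_\V$ (i.e., where $h_\V(w_i) = h_\V(w_j)$) — in that situation both endpoints sit on the same side of $[s,t]$, so the edge is either wholly included in both sublevel sets or wholly excluded from both, and no retraction is needed. Beyond this bookkeeping, the argument is a direct consequence of the fact that on a polygon, ``critical'' values of the height function only occur at vertices.
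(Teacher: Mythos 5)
Your proof is correct, and after the shared first step (reducing to the boundary via \Cref{lem:reduce_to_boundary}) it takes a genuinely different route from the paper's. The paper stays at the level of $H_0$: it invokes \Cref{lem:CC_contains_vertex} to see that $H_0((\partial M)_{\V,s})$ and $H_0((\partial M)_{\V,t})$ are both spanned by the classes of the vertices they contain, observes that the two sublevel sets contain exactly the same vertices (giving surjectivity of $\psi_{st}$), and checks injectivity by noting that a path in $(\partial M)_{\V,t}$ joining two vertices passes through a consecutive chain of boundary vertices, each of height $\leq t$ and hence $< s$, so the same chain already connects them in $(\partial M)_{\V,s}$. You instead prove the stronger geometric statement that $(\partial M)_{\V,s}$ is a deformation retract of $(\partial M)_{\V,t}$, built edge by edge from the affineness of $h_\V$ on segments and glued by the pasting lemma. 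Your route buys more --- a homotopy equivalence, hence isomorphisms in every degree rather than just on connected components --- and is independent of \Cref{lem:CC_contains_vertex}; the paper's route is shorter because it needs only $H_0$ and reuses a lemma required elsewhere. One cosmetic point: the ``linear retraction toward $w_i$'' on a mixed edge rescales the level-$s$ subsegment rather than fixing it pointwise, so it is not a strong deformation retraction as stated; either replace it by the map that fixes the portion of height $\leq s$ and collapses the portion of height in $(s,t]$ onto the point of height $s$, or note that the rescaling is still a homotopy inverse to the inclusion. Nothing essential changes.
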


\begin{proof}
    By \Cref{lem:def_retract}, it suffices to show that $\psi_{st}: H_0((\partial M)_{\V, s}) \to H_0((\partial M)_{\V, t})$ is an isomorphism, where $\psi_{st}$ is the map induced by the inclusion $(\partial M)_{\V, s} \xhookrightarrow{} (\partial M)_{\V, t}$. For all $p\in \R$ and all points $x \in (\partial M)_{\V, p}$, let $[x]_p$ denotes the connected component of $(\partial M)_{\V, p}$ that contains $x$.
    By \Cref{lem:CC_contains_vertex}, every connected component contains at least one vertex of $M$, so $H_0((\partial M)_{\V, t})$ and $H_0((\partial M)_{\V, s})$ are spanned by $\{[w_{j_i}]_t\}_i$ and $\{[w_{j_i}]_s\}_i$, respectively, where 
    \begin{equation*}
        \{w_{j_i}\}_i = \{\text{vertices } w \mid h_{\V}(w) \leq t\} = \{\text{vertices } w \mid h_{\V}(w) \leq s\}\,.
    \end{equation*}
    Because $(\partial M)_{\V, t}$ and $(\partial M)_{\V, s}$ contain the same vertices, the map $\psi_{st}([w_{j_i}]_s) = [w_{j_i}]_t$ is surjective. 
    If $\psi([w_j]_s - [w_\ell]_s) = [w_j]_t - [w_\ell]_t = 0$, then either $w_j, w_{(j+1) \mod (k+1)}, \ldots, w_\ell \in (\partial M)_{\V, t}$ or $w_\ell, w_{(\ell + 1) \mod (k+1)}, \ldots, w_j \in (\partial M)_{\V, t}$, where $k + 1$ is the number of vertices of $M$. 
    Without loss of generality, assume the latter. Then $w_j, w_{(j+1) \mod (k+1)}, \ldots, w_\ell \in M_{\V s}$ as well, so $[w_j]_s = [w_\ell]_s$. 
    Therefore, $\psi_{st}$ is injective.
\end{proof}

The following lemma says that adding a single vertex can create at most one new homology class and destroy at most one new homology class.

\begin{lemma}\label{lem:distinct_cps}
    Suppose that $M$ is a star-shaped planar polygon and let $\V \in \bS^1$.
    Let $[s, t]$ be an interval and let $\phi_{st} \colon H_0(M_{\V, s}) \to H_0(M_{\V, t})$ be induced by the inclusion $M_s \xhookrightarrow{} M_t$. 
    If $t < h_{\V}(c)$ and there is exactly one vertex $w$ such that $h_{\V}(w) \in [s, t]$, then $\dim(\ker(\phi_{st})) \leq 1$ and $\dim(H_0(M_{\V, t})/\text{Im}(\phi_{st})) \leq 1$.
\end{lemma}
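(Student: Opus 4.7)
The plan is to apply Lemma~\ref{lem:reduce_to_boundary} to reduce the question to the corresponding statement about $\psi_{st}\colon H_0((\partial M)_{\V,s}) \to H_0((\partial M)_{\V,t})$, and then analyze how the connected components of the polygonal boundary sublevel set evolve as $t$ varies. Specifically, once we pass to $\partial M$, by Lemma~\ref{lem:CC_contains_vertex} every connected component is represented by a vertex, so a kernel element of $\psi_{st}$ corresponds to a pair of vertex classes in $(\partial M)_{\V,s}$ that become identified in $(\partial M)_{\V,t}$, and a cokernel generator corresponds to a component of $(\partial M)_{\V,t}$ whose vertices are all absent from $(\partial M)_{\V,s}$.

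Next I would exploit the uniqueness of $w$. For every edge of $\partial M$ whose two endpoints both have $h_\V$-value outside $[s,t]$, its contribution to $(\partial M)_{\V,\cdot}$ is either empty at both times, fully present at both times, or an initial segment from a fixed vertex which merely grows in length -- in any case the connectivity type contributed by such an edge does not change between times $s$ and $t$. Hence the only place where the homotopy type of $(\partial M)_{\V,t}$ differs from that of $(\partial M)_{\V,s}$ is in a neighborhood of $w$ and its two incident edges $e_1=(w,u_1)$, $e_2=(w,u_2)$ (since $M$ is a planar polygon, each vertex lies on exactly two boundary edges).

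The main step is then a short case analysis on $h_\V(u_1)$ and $h_\V(u_2)$, each of which lies either in $(-\infty,s]$ or in $(t,\infty)$ by the uniqueness assumption. If both neighbors have height $\leq s$, then at time $s$ there are two disjoint partial edges emanating from $u_1$ and $u_2$ toward $w$, and at time $t$ they are joined into one arc through $w$; this can merge at most two components into one, giving $\dim(\ker\psi_{st})\leq 1$ and $\dim(\operatorname{coker}\psi_{st})=0$. If both neighbors have height $>t$, then at time $s$ neither edge contributes, whereas at time $t$ a ``V''-shape at $w$ appears, whose partial edges reach only up to height $t$ and so meet no other vertex; this is a single new component disjoint from the rest, yielding $\dim(\ker\psi_{st})=0$ and $\dim(\operatorname{coker}\psi_{st})\leq 1$. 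In the mixed case (one neighbor $\leq s$, the other $>t$) the new material at $w$ glues onto the already-present component containing the low-height neighbor, and neither dimension grows. The remaining edge case $h_\V(w)=s$ is even easier: $w$ already belongs to $(\partial M)_{\V,s}$, no new vertex is crossed, and $\psi_{st}$ is an isomorphism.

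The step I expect to be the main obstacle is justifying cleanly that edges not incident to $w$ contribute nothing to the kernel or cokernel of $\psi_{st}$. Concretely, for an edge whose endpoints have heights $<s$ or $>t$, one has to check that the portion lying in the sublevel set is an interval which, in $(\partial M)_{\V,s}$ and in $(\partial M)_{\V,t}$, is attached to the same vertex (and therefore to the same component, up to the effect of $w$). Once this is in place, the vertex-based basis of $H_0$ from Lemma~\ref{lem:CC_contains_vertex} lets the case analysis above translate directly into the stated bounds on $\dim(\ker\phi_{st})$ and $\dim(H_0(M_{\V,t})/\operatorname{Im}\phi_{st})$ via Lemma~\ref{lem:reduce_to_boundary}.
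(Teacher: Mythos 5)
Your proposal is correct and follows essentially the same route as the paper: reduce to the boundary sublevel sets via the deformation retraction, use \Cref{lem:CC_contains_vertex} to represent components by vertices, and observe that only the single new vertex $w$ and its two incident boundary edges can alter the component structure, so at most one class is born and at most one dies. Your explicit case analysis on the heights of the two neighbours of $w$ simply spells out what the paper compresses into ``a similar argument as in \Cref{lem:cp_is_vertex}.''
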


\begin{proof}
    Let $w_i$ be the vertex of $M$ such that $h_{\V}(w_i) \in [s, t]$. 
    By \Cref{lem:def_retract}, it suffices to show that $\dim(H_0((\partial M)_{\V, t})/\text{Im}(\psi_{st}) \leq 1$ and $\dim(\ker(\psi_{st})) \leq 1$, where $\psi_{st}$ is the map induced by the inclusion $(\partial M)_{\V, s} \xhookrightarrow{} (\partial M)_{\V, t}$. 
    For all $p\in \R$ and all points $x \in (\partial M)_{\V, p}$, let $[x]_p$ denotes the connected component of $(\partial M)_{\V, p}$ that contains $x$.
    
    Let $x$ be any point in $(\partial M)_{\V, t}$. 
    By \Cref{lem:CC_contains_vertex}, every connected component contains at least one vertex of $M$, so $[x]_t = [w_j]_t$ for some vertex $w_j$. 
    If $j \neq i$, then $w_j \in (\partial M)_{\V s}$ because $w_i$ is the only new vertex, so $[x]_t = \psi_{st}([w_j]_s) \in \text{Im}(\psi_{st})$. 
    Otherwise, if $i = j$, then $[x]_t = [w_i]_t$. 
    Therefore, $\dim(H_0((\partial M)_{\V, t})/\text{Im}(\psi_{st})) \leq 1$ because $[w_i]_t$ is the only possible new component.

    A similar argument as in \Cref{lem:cp_is_vertex} shows that if $w_{(i-1) \mod (k+1)}$, $w_{(i+1) \mod (k+1)} \in M_{\V s}$, then $\ker(\psi_{st})$ is spanned by $[w_{(i-1) \mod (k+1)}]_s - [w_{(i+1) \mod (k+1)}]_s$. 
    Otherwise, $\ker(\psi_{st}) = 0$. In either case, $\dim(\ker(\psi_{st})) \leq 1$.
\end{proof}

We now show that there are objects with simple $\Dgm_0$, which thus have trivial geometric monodromy.

\begin{definition}(\cite{erdos1986some})
    A set $P$ of points in $\R^d$ is said to be in \define{general position} if there are no two parallel and distinct lines each passing through pairs of points in $P$. 
\end{definition}

\begin{theorem}\label{thm:no_monodromy}
Let $M \subset \R^2$ be a star-shaped planar polygon whose vertices are in general position. 
Then $\PHT_0(M)$ has trivial geometric monodromy.
\end{theorem}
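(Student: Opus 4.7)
The plan is to apply \cref{prop:no_monodromy_suff_conditions}, for which it suffices to show that $M$ has simple $\Dgm_0$: for every $\V \in \bS^1$, $\Dgm_0(M,\V)$ is finite and has no off-diagonal point of multiplicity greater than $1$. Finiteness is straightforward: $M$ has finitely many vertices $\{w_j\}_{j=0}^k$, and by \cref{lem:cp_is_vertex} the module $\text{PH}_0(M,\V)$ changes only at the finite set of critical values $\{h_\V(w_j)\} \cup \{h_\V(c)\}$, with no further changes for $t \geq h_\V(c)$ since $M_t$ is then connected (as $M$ is star-shaped).

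For the absence of multiplicity, suppose toward contradiction that two off-diagonal points of $\Dgm_0(M,\V)$ coincide at some point $(b,d)$. By \cref{lem:cp_is_vertex} the birth time $b$ must equal some vertex height (and $b < h_\V(c)$, since after $h_\V(c)$ no new classes appear), and by \cref{lem:distinct_cps} a single vertex at height $b$ produces at most one birth. Thus two coincident births at $b$ force two distinct vertices $w_i \neq w_j$ with $h_\V(w_i) = h_\V(w_j) = b$, meaning the line through $w_i,w_j$ is perpendicular to $\V$. By the general-position hypothesis this is the unique such line, so (assuming the polygonal convention that no three vertices are collinear) exactly two vertices sit at height $b$ and no other pair of vertices shares a height for this $\V$.

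Via the deformation retract of \cref{lem:def_retract}, births correspond to local minima and deaths at $t < h_\V(c)$ to local maxima of $h_\V|_{\partial M}$. By the previous paragraph, for this $\V$ all local maxima have pairwise distinct heights below $h_\V(c)$, so a common death $d < h_\V(c)$ would force another equal-height vertex pair, contradicting general position. The remaining and main obstacle is the case $d = h_\V(c)$, when the two arcs born at $b$ would both persist to the center and merge there simultaneously. I would rule this out by combining the sector decomposition \cref{thm:sector_decomposition}, which localises each newly born arc to a sector, with the ray-along retraction of \cref{lem:def_retract}: using the planar star-shape structure, one shows that at least one of the two arcs must merge at a local maximum of $h_\V|_{\partial M}$ strictly below $h_\V(c)$, precluding the simultaneous deaths at $h_\V(c)$. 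Once this final case is excluded, $M$ has simple $\Dgm_0$, and \cref{prop:no_monodromy_suff_conditions} yields the trivial geometric monodromy of $\PHT_0(M)$.
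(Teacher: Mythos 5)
Your overall route is the same as the paper's: reduce via \cref{prop:no_monodromy_suff_conditions} to showing that $M$ has simple $\Dgm_0$, use \cref{lem:cp_is_vertex} to place every birth and every death occurring below $h_\V(c)$ at a vertex height, use \cref{lem:distinct_cps} to upgrade a repeated birth (resp.\ death) to a pair of \emph{distinct} vertices at equal height, and conclude that $b_1=b_2$ and $d_1=d_2$ would produce two segments of vertices both orthogonal to $\V$, hence parallel, contradicting general position. That part is correct and is exactly the paper's argument (your extra parenthetical about collinear triples is unnecessary: if the birth-pair and death-pair span the same line, then $b=d$ and the point is diagonal).

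The genuine gap is the case $d=h_\V(c)$, which you correctly isolate but do not resolve: ``one shows that at least one of the two arcs must merge at a local maximum of $h_\V|_{\partial M}$ strictly below $h_\V(c)$'' is asserted, not proved, and it is the one delicate point. It cannot be waved away, because for non-simple star-shaped objects deaths at exactly $h_\V(c)$ really do occur (e.g.\ the two triangles meeting at $c$ in \cref{fig:holed_starshaped}), so the boundary structure of a polygon must actually be used. (The paper is also terse here: it claims $d_i<h_\V(c)$ ``because $M_{\V,t}$ is connected for $t\ge h_\V(c)$,'' which by itself only yields $d_i\le h_\V(c)$.) The clean way to close it, for $c$ in the interior, is to parameterize $\partial M$ radially about $c$ as $c+r(\theta)u(\theta)$ with $r>0$, so that $h_\V(x)-h_\V(c)=r(\theta)\langle\V,u(\theta)\rangle$ has the sign of $\langle\V,u(\theta)\rangle$; hence $\{x\in\partial M\mid h_\V(x)<h_\V(c)\}$ is a \emph{single} open boundary arc, every merge of components of $(\partial M)_{\V,t}$ for $t<h_\V(c)$ happens at a local maximum interior to that arc and therefore at height strictly below $h_\V(c)$, and so every non-essential class dies at a vertex height below $h_\V(c)$. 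With that in place your argument goes through; the detour through \cref{thm:sector_decomposition} that you propose for this step is not needed.
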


\begin{proof}
By \Cref{prop:no_monodromy_suff_conditions}, it suffices to show that $M$ has simple $\Dgm_0$. 
For each $\V \in \bS^1$, the persistence diagram $\Dgm_0(M, \V)$ has only one essential class because $M$ is connected, so it suffices to consider the finite classes. 
Let $(b_1, d_1), (b_2, d_2) \in \widetilde{\Dgm}_0(M, \V)$ be two distinct points. We must have $b_i, d_i < h_{\V}(c)$ for each $i$ because $M_{\V, t}$ is connected for all $t \geq h_{\V}(c)$. 
By \Cref{lem:cp_is_vertex}, there are vertices $w_{b_1}$, $w_{b_2}$, $w_{d_1}$, and $w_{d_2}$ of $M$ such that $b_i = h_{\V}(w_{b_i})$ and $d_i = h_{\V}(w_{d_i})$. 
By \Cref{lem:distinct_cps}, $w_{b_1} \neq w_{b_2}$ and $w_{d_1} \neq w_{d_2}$. 
To obtain a contradiction, suppose that $b_1 = b_2$ and $d_1 = d_2$. 
Then $\ip{\V}{w_{b_1} - w_{b_2}} = 0$ and $\ip{\V}{w_{d_1} - w_{d_2}}= 0$, so $\V$ is orthogonal to both (nonzero) vectors $w_{b_1} - w_{b_2}$ and $w_{d_1} - w_{d_2}$. 
Therefore, the line segments $(w_{b_1}, w_{b_2})$ and $(w_{d_1}, w_{d_2})$ are parallel, which contradicts $M$ being in general position.
\end{proof}

We remark that neither hypothesis in \cref{thm:no_monodromy} is necessary, as \cref{ex:genpos_notnec,ex:polygon_notnec} show.

\begin{example}\label{ex:genpos_notnec}
    Let $M$ be a (filled) regular planar polygon. 
    Then, for every $\V\in\bS^1$, $\Dgm_{0}(M,\V)$ has only one point, namely the essential class. 
    Thus, $M$ has simple $\Dgm_0$, even though its vertices are not in general position.
\end{example}

\begin{example}\label{ex:polygon_notnec}
Consider the shapes depicted in \cref{fig:no_wedge}.
Neither of them is a polygon, since they each have a segment ``poking out'', but the one on the left has no points with multiplicity greater than $1$ in its degree-$0$ persistence diagram, while the one on the right has infinite directions $\V \in \bS^1$ such that $\Dgm_0(M, \V)$ has a point with multiplicity $2$ (one such direction is depicted in green in \cref{fig:no_wedge_bad}).
\end{example}

\begin{figure}[h]
\centering
\begin{subfigure}[t]{0.5\textwidth}
    \centering\includegraphics[width = .4\textwidth]{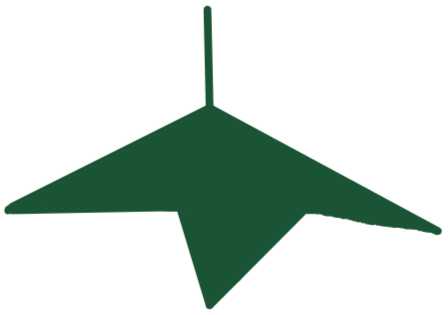}    
    \caption{}
    \label{fig:no_wedge_good}
\end{subfigure}%
\begin{subfigure}[t]{0.5\textwidth}
    \centering
    \includegraphics[width = .3\textwidth]{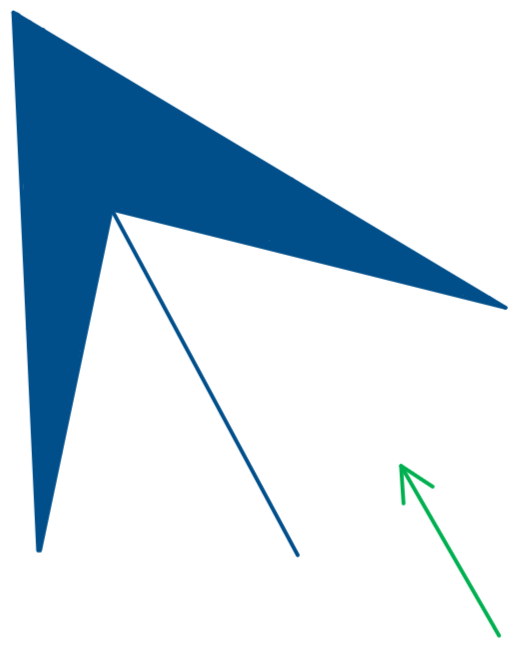}
    \caption{}
    \label{fig:no_wedge_bad}
\end{subfigure} 
\caption{Two non-polygonal star-shaped objects, (a) with simple $\Dgm_0$ and (b) with non-simple $\Dgm_0$.}
\label{fig:no_wedge}
\end{figure}

The hypotheses of \cref{thm:no_monodromy} are quite general and are satisfied by many practical examples, such as the leaves in \cref{fig:leaf_examples}.

\section{Conclusions}

\subsection{Summary}
In this paper, we investigate the decompositions of degree-$0$ persistent homology transform for different shapes, focusing on star-shaped objects in $\mathbb{R}^2$. 

Star-shaped objects can be decomposed into smaller, simpler regions (\textit{sectors}) defined by the convex hull and a center of the object. 
We demonstrate that the PHT of star-shaped objects can be decomposed into that of its sectors (\Cref{thm:sector_decomposition}). 

Moreover, we show that star-shaped objects in $\R^2$ with \textit{simple} persistence diagrams (\cref{def:simple_PD}) exhibit trivial geometric monodromy (\cref{prop:no_monodromy_suff_conditions}). 
This is not true in general; see \Cref{ex_monodromy} for an example of a spiral shape whose PHT has non-trivial monodromy. 
The presence of non-trivial monodromy in the context of persistence diagrams is important as it prevents the separation of persistent homology classes into distinct local classes, each living in a subset of $\bS^1$.

In addition, we provide a counterexample of a star-shaped object in $\R^3$ whose short-lived classes are born in one sector and die in another (\cref{no_higherdim_LV}). 
This example shows that the PHT of certain star-shaped objects in $\R^3$ cannot be decomposed into sectors as persistent homology classes survive outside the sectors. 
It is therefore not straightforward to generalize the result of \cref{prop:no_monodromy_suff_conditions} in $\R^3$.

We further investigate the subset of star-shaped objects with simple PDs, showing that star-shaped planar polygons whose vertices are in general position have simple PDs (\cref{thm:no_monodromy}). Real-life data that satisfy these conditions can be found, for example, in leaf shapes (\cref{fig:leaf_examples}).

\subsection{Future work}

There are several directions in which this work can be extended.
First of all, our main results were restricted to shapes in $\R^2$.
In \Cref{sec:alg_dec}, we restricted to $\R^2$ because the Mayer--Vietoris argument required that the sublevel-sets of each sector intersected at most two other sectors; this is not true in higher dimensions, so the proof of \Cref{thm:sector_decomposition} does not generalize to $\R^d$. 
We know from \cref{no_higherdim_LV} that the latter result cannot be extended in higher dimensions, but there may be families of shapes for which we can prove, under further hypotheses, a generalization of \cref{thm:sector_decomposition}. 
Moreover, one could focus on different classes of non-star-shaped objects and try to extend first the definition of sectors and then the decomposition into sectors for such objects. 
Depending on the type of objects, this may entail studying higher-degree PHT in addition to degree-$0$ PHT.

Further directions involve the generalizations of the results we presented in \cref{sec:monodromy}. 
Part of the results hold naturally in $\R^d$. 
However, the two main results of this section hold only in $\R^2$, as discussed in \cref{ex:higher_spiral}.
Therefore, being star-shaped is not enough in $\R^d$ to ensure trivial monodromy and additional conditions need to be investigated.

\cref{thm:sector_decomposition} shows that we can decompose the persistence modules in each direction, and, together with \cref{thm:no_monodromy}, this means we can decompose the persistent homology transform into a circle variant of \textit{vineyard modules} \cite{turner2023}. 
Future directions involve understanding the algebraic decomposition (as opposed to geometric decomposition) of the PHT of Euclidean subsets.

\paragraph{Acknowledgements.}
The authors thank the Women in Computational Topology Network (WinCompTop), the \'Ecole Polytechnique F\'ed\'erale de Lausanne (EPFL), 
the Bernoulli Center, and the US National Science Foundation through award number $2317401$ for supporting the third Workshop for Women in Computational Topology in July 2023, where this project began.
B.G. was partially supported by the Austrian Science Fund (FWF) P 33765-N. 
A.H. was supported by NSF grant DMS-2303402.
L.K. was supported by funding to the Blue Brain Project, a research center of the EPFL, from the Swiss government's ETH Board of the Swiss Federal Institutes of Technology.
S.M. was funded in part by NSF CCF-2106578 and NIH NIGMS-R01GM135929.
K.T. was supported by an Australian Research Council Discovery Early Career Award (project number DE200100056) funded by the Australian Government.

\bibliographystyle{naturemag}
\bibliography{bibliography}

\end{document}